\numberwithin{equation}{section}
\newcommand{\thefont}[2]{\fontsize{#1}{#2}\fontshape{n}\selectfont}
\def\argmin{\mathop{\rm arg \; min}\limits}%
\theoremstyle{plain}
\newtheorem{prop}{Proposition}[section]
\newtheorem{coro}{Corollary}[section]
\newtheorem{theo}{Theorem}[section]
\newtheorem{lem}{Lemma}[section]
\newtheorem{hyp}{Assumption}[section]
\theoremstyle{definition}
\newtheorem{defin}{Definition}[section]
\theoremstyle{remark}
\newtheorem{rem}{Remark}[section]
\newcommand{\bigO}{\ensuremath{\mathcal O}}
\newcommand{\ba}{\boldsymbol{a}}
\newcommand{\bb}{\boldsymbol{b}}
\newcommand{\be}{\boldsymbol{e}}
\newcommand{\bp}{\boldsymbol{p}}
\newcommand{\bfun}{\boldsymbol{f}}
\newcommand{\bF}{\boldsymbol{F}}
\newcommand{\bX}{\mbox{$\boldsymbol{X}$}}
\newcommand{\bI}{\mbox{$\boldsymbol{I}$}}
\def\var{\mathop{\rm Var}\nolimits}%
\newcommand{\bmu}{\boldsymbol{\mu}}
\newcommand{\bnu}{\boldsymbol{\nu}}
\newcommand{\seb}{\hat{\bnu}^{h}_{n,\ubar{p}}}
\newcommand{\neb}{\hat{\bnu}_{n,p}}
\newcommand{\nebpbar}{\hat{\bnu}_{n,\ubar{p}}}
\newcommand{\E}{{\mathbb E}}
\newcommand{\R}{{\mathbb R}}
\renewcommand{\P}{{\mathbb P}}
\newcommand{\BB}{{\mathcal B}}
\newcommand{\U}{{\mathcal U}}
\newcommand{\WS}{W_2(\Omega)}
\newcommand{\WSac}{W_2^{ac}(\Omega)}
\newcommand{\FF}{\ensuremath{\mathcal F}}
\newcommand{\GG}{\ensuremath{\mathcal G}}
\newcommand{\DD}{\ensuremath{\mathcal D}}
\newcommand{\HH}{\ensuremath{\mathcal H}}
\newcommand{\1}{\rlap{\thefont{10pt}{12pt}1}\kern.16em\rlap{\thefont{11pt}{13.2pt}1}\kern.4em}
\newcommand{\ubar}[1]{\text{\b{$#1$}}}
\title{Minimax convergence rate for estimating the Wasserstein barycenter of random measures on the real line}
\author{ J\'{e}r\'{e}mie Bigot$^{1}$,  Ra\'ul Gouet$^{2}$, Thierry Klein$^{3}$ \& Alfredo L\'{o}pez$^{4}$  \\
\\  Institut de Math\'ematiques de Bordeaux et CNRS  (UMR 5251)$^{1}$  \\ Universit\'e de Bordeaux  \vspace{0.1cm}  \\ Depto. de Ingenier\'{\i}a Matem\'{a}tica and CMM (CNRS, UMI 2807)$^{2}$ \\ Universidad de Chile \vspace{0.1cm}\\  ENAC- Ecole nationale de l'aviation civile\\ et Institut de Math\'ematiques de Toulouse et CNRS  (UMR 5219)$^{3}$ \\ Universit\'e de Toulouse  \vspace{0.1cm}  \\ CSIRO Chile International Centre of Excellence$^{4}$}
\date{\today}
\begin{document}

\maketitle

\thispagestyle{empty}

\begin{abstract}
This paper is focused on the statistical analysis of probability measures $\bnu_{1},\ldots,\bnu_{n}$ on $\R$ that can be viewed as independent realizations of an underlying stochastic process. We  consider the situation of practical importance where the random measures $\bnu_{i}$ are absolutely continuous with densities $\bfun_{i}$ that are not directly observable. In this case, instead of the densities, we have access to datasets of real random variables  $(X_{i,j})_{1 \leq i \leq n; \; 1 \leq j \leq p_{i} }$ organized in the form of $n$ experimental units, such that $X_{i,1},\ldots,X_{i,p_{i}}$ are iid observations sampled from a random measure  $\bnu_{i}$ for each $1 \leq i \leq n$. In this setting, we focus on first-order statistics methods for estimating, from such data, a meaningful structural mean measure. For the purpose of taking into account phase and amplitude variations in the observations, we argue that the notion of Wasserstein barycenter is a relevant tool. The main contribution of this paper is to characterize the rate of convergence of a (possibly smoothed) empirical Wasserstein barycenter towards its population counterpart in the asymptotic setting where both $n$ and $\min_{1 \leq i \leq n} p_{i}$ may go to infinity.  The optimality of this procedure is discussed from the minimax point of view with respect to the Wasserstein metric. We also highlight the connection between our approach and the curve registration problem in statistics. Some numerical experiments are used to illustrate the results of the paper on the convergence rate of empirical Wasserstein barycenters.
\end{abstract}

\noindent \emph{Keywords:}  Wasserstein space; Fr\'echet mean; Barycenter of probability measures; Functional data analysis; Phase and amplitude variability; Smoothing; Minimax optimality. \\

\noindent\emph{AMS classifications:} Primary 62G08; secondary 62G20.


\section{Introduction}

In this paper, we are concerned with the statistical analysis of a set of absolutely continuous measures $\bnu_{1},\ldots,\bnu_{n}$ on the real line $\R$, with supports included  in (a possibly unbounded) interval $\Omega \subset \R$,  that can be viewed as independent copies of an underlying random measure $\bnu$. In this setting, it is of interest to define and estimate a mean measure $\nu_{0}$ of the random probability measure $\bnu$. The notion of mean or averaging depends on the metric that is chosen to compare elements in a given data set. In this work, we consider the Wasserstein metric $d_W$ associated to the quadratic cost for the comparison of probability measures and we define $\nu_{0}$ as the population Wasserstein barycenter of $\bnu$, given by
$$
\nu_{0} = \argmin_{\mu \in \WS} \E \left[ d_W^{2}(\bnu,\mu) \right],
$$
where the above expectation  is taken with respect to the distribution of $\bnu$, and $\WS$ denotes the space of probability measures with support included  in $\Omega$ and with finite second moment.  A Wasserstein barycenter  corresponds to the Fr\'echet mean \cite{fre} that is an extension of the usual Euclidean mean to non-linear metric spaces. Throughout the paper, the population mean measure $\nu_{0}$ is also referred to as the structural mean of $\bnu$, which is a terminology borrowed from curve registration (see   \cite{ZhangMuller} and references therein).

Data sets leading to  the analysis of absolutely continuous measures appear in various research fields. Examples can be found in neuroscience  \cite{Srivastava}, demographic and genomics studies \cite{MR2736564,ZhangMuller},  economics  \cite{MR1946423}, as well as in biomedical imaging  \cite{PetersenMuller}. Nevertheless, in such applications, one does not directly observe raw data in the form of absolutely continuous measures. Indeed, we generally only have  access  to random observations sampled from different distributions that represent independent subjects or experimental units.

Thus, we propose to study the  estimation of the structural mean measure $\nu_{0}$ (the population Wasserstein barycenter) from a data set consisting of independent real random variables  $(X_{i,j})_{1 \leq i \leq n; \; 1 \leq j \leq p_{i} }$ organized in the form of $n$ experimental units, such that (conditionally on $\bnu_{i}$) the random variables  $X_{i,1},\ldots,X_{i,p_{i}}$ are iid observations sampled from the measure $\bnu_{i}$ with density $\bfun_{i}$, where $p_{i}$ denotes the number of observations for the $i$-th subject or experimental unit. The main purpose of this paper is to propose nonparametric estimators of the structural mean measure $\nu_{0}$ and to characterize their rates of convergence with respect to the Wasserstein metric in the asymptotic setting, where both $n$ and $\min_{1 \leq i \leq n} p_{i}$ may go to infinity.

\subsection{Main contributions}

Two types of nonparametric estimators are considered in this paper. The first one is given by the empirical Wasserstein barycenter 
of the set of measures $\tilde{\bnu}_1,\ldots,\tilde{\bnu}_{n}$, with $\tilde{\bnu}_{i} = \frac{1}{p_i} \sum_{j=1}^{p_i} \delta_{X_{i,j}}$ for $1 \leq i \leq n$. This estimator will be referred to as the non-smoothed empirical Wasserstein barycenter.  Alternatively, since the unknown probability measures $\bnu_{i}$ are supposed to be absolutely continuous, a second estimator is based on a preliminary smoothing step which consists in using standard kernel smoothing to construct estimators $\hat{\bfun}_{i}$ of the unknown densities $\bfun_{i}$ for each $1 \leq i \leq n$. Then, an estimator of $\nu_{0}$ is obtained by taking the empirical Wasserstein barycenter of the measures $\hat{\nu}_{i},\ldots,\hat{\nu}_{n}$, with $\hat{\nu}_{i}(A) := \int_A \hat{f}_{i} (x) dx$, $A \subset \R$ measurable. We refer to this class of estimators as smoothed empirical Wasserstein barycenters whose smoothness depend on the choice of the bandwidths in the preliminary kernel smoothing step. 

The rates of convergence of both types of estimators are derived for their (squared) Wasserstein risks, defined as their expected (squared) Wassertein distances from $\nu_{0}$, and their optimality is discussed from the minimax point of view. Finally, some numerical experiments with simulated data are used to illustrate these results. 

\subsection{Related work in the literature}

The notion of barycenter in the Wasserstein space, for a finite set of $n$ probability measures supported on $\R^{d}$ (for any $d \geq 1$), has been recently introduced in \cite{agueh2011barycenters} where a detailed characterization of such barycenters  in terms of existence, uniqueness and regularity is given using arguments from duality and convex analysis. However, the convergence (as $n \to + \infty$) of such Wasserstein barycenters is not considered in that work.

In the one dimensional case ($d = 1$), computing the Wasserstein barycenter of a finite set of probability measures simply amounts to averaging (in the usual way) their quantile functions. In statistics, this approach has been referred to as quantile synchronization \cite{ZhangMuller}. In the presence of phase variability in the data, quantile synchronization is known to be an appropriate alternative to the usual Euclidean mean of densities to compute a structural mean density that is more consistent with the data. Various asymptotic properties of quantile synchronization are studied in \cite{ZhangMuller} in a statistical model and asymptotic setting similar to that of this paper with $\min_{1 \leq i \leq n} p_{i} \geq n$. However, other measures of risk than the one in this paper are considered in \cite{ZhangMuller}, but the optimality of the resulting convergence rates of quantile synchronization is not discussed. 

The results of this paper are very much connected with those in \cite{Pana15} where a new framework is developed for the registration of multiple point processes on the real line for the purpose of separating amplitude and phase variation in such data.  In \cite{Pana15}, consistent estimators of the structural mean of multiple point processes are obtained by the use of smoothed Wasserstein barycenters with an appropriate choice of kernel smoothing. Also, rates of convergence of such estimators are derived for the Wasserstein metric. The statistical analysis of  multiple point processes is very much connected to the study of repeated observations organized in samples from independent subjects or experimental units. Therefore, some of our results in this paper on smoothed empirical Wasserstein barycenters are built upon the work in \cite{Pana15}. Nevertheless, novel contributions include the derivation of an exact formula to compute the risk of non-smoothed Wasserstein barycenters in the case of samples of equal size, and new upper bounds on the rate of convergence of the Wasserstein risk of non-smoothed and smoothed empirical Wasserstein barycenters, together with a discussion of their optimality from the minimax point of view.    

The construction of consistent estimators of a population Wasserstein barycenter for semi-parametric models of random measures can also be found in \cite{BK16} and \cite{MR3338645}, together with a discussion on their connection to the well known curve registration problem in statistics \cite{ramli,wanggas}.

\subsection{Organization of the paper}

In Section \ref{sec:model}, we  first briefly explain why using statistics based on the Wasserstein metric is a relevant approach for the  analysis of a set of random measures in the presence of phase and amplitude variations in their densities. Then, we introduce a deformable model for the registration of probability measures that is appropriate to study the statistical properties of empirical Wasserstein barycenters. The two types of nonparametric estimators described above are finally introduced at the end of Section \ref{sec:model}. The convergence rates and the optimality of these estimators are studied in Section \ref{sec:convrate}. Some numerical experiments with simulated data are proposed in Section \ref{sec:num} to highlight the finite sample performances of these estimators. Section \ref{sec:conclusion} contains a discussion on the main contributions of this work and their potential extensions. The proofs of the main results are gathered in a technical Appendix. Finally, note that we use  bold symbols $\bfun, \bnu, \ldots$ to denote random objects (except real random variables). 

\section{Wasserstein barycenters for the estimation of the structural mean in a deformable model of probability measures} \label{sec:model} 

\subsection{The need to account for phase and amplitude variations}

To estimate a mean measure from the data  $(X_{i,j})_{1 \leq i \leq n; \; 1 \leq j \leq p_{i} }$, a natural approach is the following one. In a first step, one uses the $X_{i,j}$'s to compute estimators  $\hat{\bfun}_1,\ldots,\hat{\bfun}_n$  (e.g.\ via kernel smoothing) of the unobserved density functions $\bfun_1,\ldots,\bfun_n$ of the measures $\bnu_{1},\ldots,\bnu_{n}$. Then, an estimator of a mean density  might be defined as the usual Euclidean mean $\bar{\bfun}_{n} = \frac{1}{n} \sum_{i=1}^{n}  \hat{\bfun}_{i}$, which is also classically referred to as the cross-sectional mean in curve registration. At the level of measures, it corresponds to computing the arithmetical mean measure  $\bar{\bnu}_{n} = \frac{1}{n} \sum_{i=1}^{n}  \hat{\bnu}_{i}$.  The Euclidean mean $\bar{\bfun}_{n}$ is to the Fr\'echet mean of the $\hat{\bfun}_i$'s with respect to the usual squared distance in the Hilbert space $L^{2}(\Omega)$ of square integrable functions on $\Omega$. Therefore, it only accounts for linear variations in amplitude in the data. However, as remarked in \cite{ZhangMuller}, in many applications, it is often of interest to also incorporate an analysis of phase variability (i.e.\ time warping) in such functional objects, since it may lead to a better understanding of the structure of the data. In such settings, the use of the standard squared distance in $L^{2}(\Omega)$ to compare density functions  ignores a possible significant source of phase variability in the data. 

To better account for phase variability in the data, it has been proposed in \cite{ZhangMuller} to introduce the so-called method of quantile synchronization as an alternative to the cross sectional mean $\bar{\bfun}_{n}$. It amounts  to computing the mean measure $\bnu_{n}^{\oplus}$ (and, if it exists, its density $\bfun_{n}^{\oplus}$) whose quantile function is
\begin{equation}
\bar{\bF}_{n}^{-} = \frac{1}{n} \sum_{i=1}^{n} \bF^{-}_{i}, \label{eq:quantoplus}
\end{equation}
where $\bF^{-}_{i}$ denotes the quantile function of the measure $\bnu_{i}$ with density $\bfun_{i}$.

The statistical analysis of quantile synchronization, as studied in \cite{ZhangMuller}, complements the quantile normalization method originally proposed in \cite{Bolstad2003} to align density curves in microarray data analysis. This method is therefore appropriate for the registration of density functions and the estimation of phase and amplitude variations as explained in details in \cite{Pana15}. 

Let us now assume that $\bnu_1,\ldots,\bnu_n$ are random elements taking values in the set of absolutely continuous measures contained in $\WS$.
In this setting, it can be checked (see e.g.\ Proposition \ref{prop:existence} below) that quantile synchronization corresponds to computing the empirical Wasserstein barycenter of the random measures $\bnu_1,\ldots,\bnu_n$, namely
$$
\bnu_{n}^{\oplus} = \argmin_{\mu \in \WS} \frac{1}{n} \sum_{i=1}^{n} d_W^{2}(\bnu_{i},\mu).
$$
Therefore, the notion of averaging by quantile synchronization corresponds to using the Wasserstein distance $d_W$ to compare probability measures, which leads to a notion of measure averaging that may better reflect the structure of the data than the arithmetical mean in the presence of phase and amplitude variability.


To illustrate the differences between using Euclidean and Wasserstein distances  to account for phase and amplitude variation, let us assume that the measures $\bnu_1,\ldots,\bnu_n$ have densities $\bfun_1,\ldots,\bfun_n$  obtained from the following location-scale model: we let $f_0$ be a density on $\R$ having a finite second moment and, for $(\ba_{i},\bb_{i})  \in (0,\infty) \times \R,\ i=1,\ldots,n$ a given sequence of independent random variables, we define 
\begin{equation} \label{eq:locscale}
\bfun_{i}(x) := \ba_i^{-1}f_{0}\left( \ba_i^{-1}(x-\bb_{i})\right), \; x \in \R, \; 1 \leq i \leq n.
\end{equation}
The  sources of variability of the densities from model \eqref{eq:locscale} are the variation in location along the $x$-axis, and the scaling variation. In Figure \ref{fig:exdata}(a), we plot a sample of $n=100$ densities from model $\eqref{eq:locscale}$ with $f_{0}$ being the standard Gaussian density, $\ba_{i} \sim \U([0.8,1.2])$ and $\bb_{i} \sim \U([-2,2])$, where $\U([x,y])$ denotes the uniform distribution on the interval $[x,y]$. In this numerical experiment, there is more variability in phase (i.e.\ location) than in amplitude (i.e.\ scaling), which can also be observed at the level of quantile functions as shown by Figure \ref{fig:exdata}(b).

\begin{figure}[htbp]
\centering
\subfigure[Densities $\bfun_1,\ldots,\bfun_n$ sampled from a location-scale model]{\includegraphics[width=7cm,height=5cm]{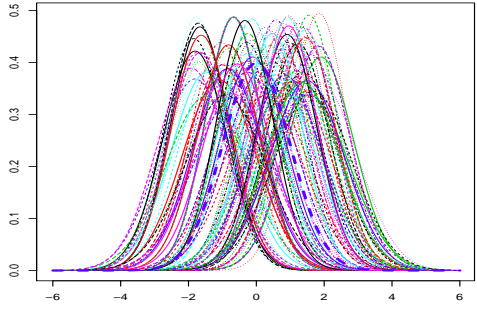}}
\subfigure[Quantile functions $\bF^{-}_{1},\ldots,\bF^{-}_{n}$  of $\bfun_1,\ldots,\bfun_n$]{\includegraphics[width=7cm,height=5cm]{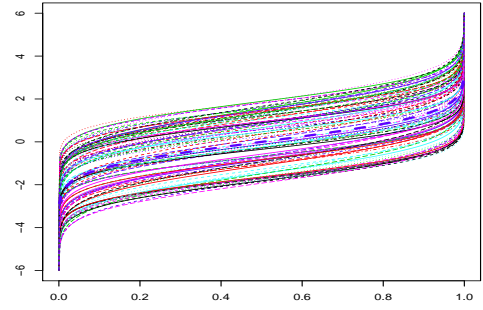}}

\subfigure[Euclidean mean density $\bar{\bfun}_{n}$]{\includegraphics[width=7cm,height=5cm]{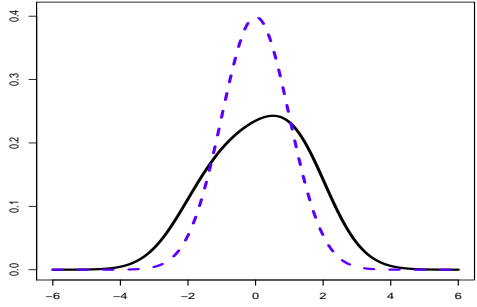}}
\subfigure[Quantile function of the arithmetical mean measure $\bar{\bnu}_{n}$ with density $\bar{\bfun}_{n}$]{\includegraphics[width=7cm,height=5cm]{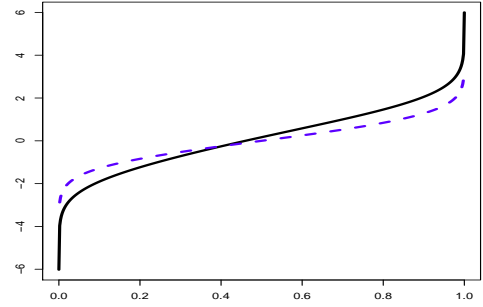}}

\subfigure[Density $\bfun_{n}^{\oplus}$ by quantile synchronization]{\includegraphics[width=7cm,height=4cm]{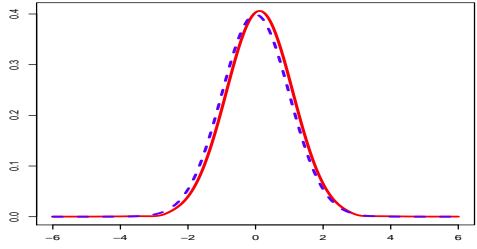}}
\subfigure[Quantile function of the Wasserstein barycenter $\bnu_{n}^{\oplus}$ with density $\bfun_{n}^{\oplus}$]{\includegraphics[width=7cm,height=4cm]{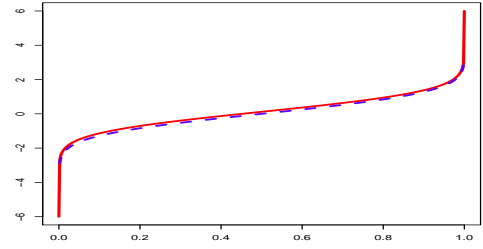}}

\caption{An example of $n=100$ random densities (a) with quantile functions (b) sampled from the location-scale model \eqref{eq:locscale} with $f_{0}$ the standard Gaussian density, $\ba_{i} \sim \U([0.8,1.2])$ and $\bb_{i} \sim \U([-2,2])$. (c,d) The solid-black curves are the Euclidean mean $\bar{\bfun}_{n}$ and its quantile function. (e,f) The solid-red curves are the structural mean $\bfun_{n}^{\oplus}$ given by quantile synchronization and the quantile function  of the empirical Wasserstein barycenter  $\bnu_{n}^{\oplus}$. In all the figures, the dashed-blue curves are either the density $f_{0}$ or its quantile function in the location-scale model \eqref{eq:locscale}.} \label{fig:exdata}
\end{figure}

 In the location-scale model \eqref{eq:locscale}, it can be checked, e.g.\ using the quantile averaging formula \eqref{eq:quantoplus}, that the empirical Wasserstein barycenter $\bnu_{n}^{\oplus}$ is the probability measure with density 
$$
\bfun_{n}^{\oplus}(x) = \bar{\ba}_{n}f_{0}\left( \bar{\ba}_{n}^{-1}(x-\bar{\bb}_{n})\right),
$$
where $\bar{\ba}_{n} = \frac{1}{n} \sum_{i=1}^{n} \ba_{i}$ and $\bar{\bb}_{n} = \frac{1}{n} \sum_{i=1}^{n} \bb_{i}$. Hence, if we assume that $\E(\ba_{1}) = 1$ and $\E(\bb_{1}) = 0$, it follows that $d_W^{2}(\bnu_{n}^{\oplus},\nu_{0})$ converges almost surely to $0$ as  $n \to \infty$, meaning that $\bnu_{n}^{\oplus}$ is a consistent estimator of $\nu_{0}$ as shown by Figure \ref{fig:exdata}(f). On the contrary, the arithmetical mean measure $\bar{\bnu}_{n}$ is clearly not a consistent estimator of  $\nu_{0}$,  as it can be observed in Figure \ref{fig:exdata}(d). \\

\begin{rem} 
It is  clear that, in the above location-scale model, one may easily prove that $\bfun_{n}^{\oplus}$ converges almost surely to $f_{0}$ as $n \to \infty$ for various distances between density functions as illustrated by Figure \ref{fig:exdata}(e). However, in this paper, we restrict our attention to the problem of how the structural mean measure $\nu_{0}$ can be estimated from empirical Wasserstein barycenters with respect to the Wasserstein distance between probability measures. Showing that the density (if it it exists)  of such estimators converges to the density $f_{0}$ of $\nu_{0}$ is not considered in this work.
\end{rem} 


\subsection{Barycenters in the Wasserstein space}

Let $\Omega$ be an interval  of $\R$, that is possibly unbounded. We let $\WS$ be the set of probability measures over $(\Omega,{\cal B}(\Omega))$, with finite second moment, where ${\cal B}(\Omega)$ is the $\sigma$-algebra of Borel subsets of $\Omega$. We also denote by $\WSac$ the set of measures $\nu \in \WS$ that are absolutely continuous with respect to the Lebesgue measure $dx$ on $\R$.  The cumulative distribution function (cdf) and the quantile function of $\nu$ are denoted respectively by $F_\nu$ and $F_\nu^{-}$.

\begin{defin}  \label{defi:wdist}
The quadratic Wasserstein distance $d_W$ in $\WS$ is defined by
\begin{equation} \label{eq:wdist}
d_W^2(\mu,\nu) := \int_{0}^1 (F_{\mu}^{-}(\alpha) - F_{\nu}^{-}(\alpha))^2 d\alpha, \mbox{ for any } \mu, \nu \in\WS.
\end{equation}
\end{defin}

It can be shown that $\WS$ endowed with $d_W$ is a metric space, usually called Wasserstein space. For a detailed analysis of $\WS$ and its connection with optimal transport theory, we refer to \cite{villani-topics}.  A probability measure $\bnu$ in $\WS$ is said to be random if it is sampled from a distribution $\P$ on $(\WS, \BB \left( \WS \right)$, where $\BB \left( \WS \right)$ is the  Borel $\sigma$-algebra  generated by the topology induced by the distance $d_W$.

\begin{defin}[Square-integrability] \label{def:square}
The random measure $\bnu$ is said to be square-integrable if 
$$
\E(d_W^2(\mu,\bnu))=\int_{\WS}d_W^2(\mu,\nu)d\P(\nu)< + \infty
$$
for some (thus for every) $\mu \in \WS$.
\end{defin}

\begin{defin}[Population and empirical Wasserstein barycenters]  \label{defi:barW2}
Let $\bnu$ be a $\WS$-valued square integrable random measure with distribution $\P$. The population Wasserstein barycenter of $\bnu$ is defined as the minimizer of
$$
\mu \mapsto \int_{\WS}d_W^2(\mu,\nu)d\P(\nu)  \mbox{ over } \mu \in \WS.
$$
The empirical Wasserstein barycenter of  $\nu_{1},\ldots,\nu_{n}\in \WS$ is defined as the minimizer of
$$
\mu \mapsto \frac{1}{n} \sum_{i=1}^{n}  d_W^2(\mu,\nu_{i})  \mbox{ over } \mu \in \WS.
$$

\begin{rem} 
In the whole paper, we assume that the model of random probability measure is well defined in the sense that all applications from an abstract probability space  to $\WS$ are measurable and hence we can apply Fubini's theorem. For an example of a rigorous model satisfying this kind of assumptions we refer to \cite{BK16}.
\end{rem}

\end{defin}
\begin{prop}\label{prop:existence} Let $\bnu\in\WS$ be a square-integrable random measure then 
\begin{enumerate}
\item[(i)] The  exists a unique barycenter $\nu_0$ of $\bnu$.
\item[(ii)] $F^-_{\nu_0}=\E\left[F^-_{\bnu}\right]$.
\item[(iii)] $\var(\bnu):=\E\left[ d^2_W\left(\bnu,\nu_0\right)\right]=\int_0^1\var\left( F^-_{\bnu}(\alpha) \right)d\alpha$.
\end{enumerate}
\end{prop}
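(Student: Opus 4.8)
The plan is to exploit the isometric identification of the one-dimensional Wasserstein space with a convex subset of a Hilbert space, which reduces the barycenter problem to an elementary mean computation. Concretely, by Definition \ref{defi:wdist} the map $\Phi : \WS \to L^2([0,1])$ given by $\Phi(\nu) = F_\nu^-$ satisfies $d_W^2(\mu,\nu) = \|F_\mu^- - F_\nu^-\|_{L^2([0,1])}^2$, so $\Phi$ is an isometry onto its image $\mathcal{Q} := \{F_\nu^- : \nu \in \WS\}$, namely the set (of $L^2$-classes) of nondecreasing left-continuous functions on $(0,1)$, which is convex and closed in $L^2([0,1])$. Since a quantile function determines the cdf and hence the measure, $\Phi$ is in fact a bijection onto $\mathcal{Q}$, so both existence/uniqueness and the explicit formula will be read off from the geometry of $\mathcal{Q}$.

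Next I would define $Q := \E[F_{\bnu}^-]$ as a Bochner integral in $L^2([0,1])$. Square-integrability (Definition \ref{def:square}) gives $\E\|F_{\bnu}^-\|_{L^2} < \infty$, so $Q$ is well defined; pairing against indicators $\mathbf{1}_{[0,t]}$ and applying Fubini shows that $Q(\alpha) = \E[F_{\bnu}^-(\alpha)]$ for almost every $\alpha$. The core computation is the Hilbert-space bias–variance identity: for any $\mu \in \WS$,
$$
\E\!\left[d_W^2(\bnu,\mu)\right] = \E\!\left[\|F_{\bnu}^- - Q\|_{L^2}^2\right] + \|Q - F_\mu^-\|_{L^2}^2,
$$
where the cross term vanishes because $Q - F_\mu^-$ is deterministic and $\E[F_{\bnu}^- - Q] = 0$. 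Hence minimizing the Fréchet functional over $\mu \in \WS$ is equivalent to minimizing $\|Q - F_\mu^-\|_{L^2}^2$ over $F_\mu^- \in \mathcal{Q}$.

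The crux of the argument, and the only step requiring genuine care, is to check that $Q$ itself lies in $\mathcal{Q}$, i.e.\ that the mean quantile function is again a quantile function. Monotonicity is inherited since $F_{\bnu}^-(\alpha_1) \le F_{\bnu}^-(\alpha_2)$ almost surely for $\alpha_1 < \alpha_2$, so the pointwise expectation $\alpha \mapsto \E[F_{\bnu}^-(\alpha)]$ is nondecreasing wherever defined; passing to its left-continuous version yields an element of $\mathcal{Q}$ equal to $Q$ almost everywhere. (One must also note that $\nu \mapsto F_\nu^-(\alpha)$ is measurable for fixed $\alpha$ and integrable for a.e.\ $\alpha$, both consequences of Fubini and the $L^2$ bound.) Once $Q \in \mathcal{Q}$ is established, the second term above attains its minimum value $0$ exactly when $F_\mu^- = Q$ almost everywhere; strict convexity of $x \mapsto \|Q - x\|_{L^2}^2$ makes this minimizer unique in $\mathcal{Q}$, and bijectivity of $\Phi$ transports uniqueness back to $\WS$, proving (i) and simultaneously giving $F_{\nu_0}^- = Q = \E[F_{\bnu}^-]$, which is (ii). Finally, (iii) follows by taking $\mu = \nu_0$ in the identity above and invoking Fubini once more:
$$
\var(\bnu) = \E\!\left[\|F_{\bnu}^- - Q\|_{L^2}^2\right] = \int_0^1 \E\!\left[\left(F_{\bnu}^-(\alpha) - \E[F_{\bnu}^-(\alpha)]\right)^2\right]d\alpha = \int_0^1 \var\!\left(F_{\bnu}^-(\alpha)\right)d\alpha.
$$
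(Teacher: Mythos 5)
Your argument is correct, and it is worth noting how it relates to what the paper actually does: the paper proves only part (iii) itself (via exactly the same Fubini computation you give at the end) and outsources (i) and (ii) entirely to Proposition 4.1 of \cite{BGKL15}, whereas you supply a self-contained proof of all three parts through the isometric embedding $\nu\mapsto F_\nu^-$ of $\WS$ into $L^2([0,1])$ and the Hilbert-space bias--variance identity. You correctly isolate the one step that needs care, namely that the Bochner mean $Q=\E[F_{\bnu}^-]$ is again a quantile function, so that the unconstrained $L^2$-minimizer actually lies in the feasible set $\mathcal{Q}$ and uniqueness transfers back to $\WS$ by injectivity of $\nu\mapsto F_\nu^-$. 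The only cosmetic point to add is that since $\WS$ consists of measures supported in the interval $\Omega$, the set $\mathcal{Q}$ should be described as the nondecreasing functions with values in $\Omega$; convexity of the interval guarantees that $\E[F_{\bnu}^-(\alpha)]$ again takes values there, so nothing breaks. What your route buys is a proof that does not lean on an external reference and makes the uniqueness mechanism (strict convexity of the squared norm on a convex set containing the mean) completely explicit; what the paper's route buys is brevity, at the cost of leaving (i) and (ii) opaque to a reader without \cite{BGKL15} at hand.
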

\begin{proof}
Points  $(i)$ and  $(ii) $ are consequences of Proposition 4.1 in \cite{BGKL15}. Let us prove $(iii)$. From \eqref{eq:wdist} and Fubini's theorem, we have
$$
\var\left(\bnu \right) =\E\left[ \int_0^1\left( F^-_{\bnu}(\alpha)-F^-_{\nu_0}(\alpha)\right)^2d\alpha\right]=\int_0^1\E\left[\left( F^-_{\bnu}(\alpha)-F^-_{\nu_0}(\alpha)\right)^2\right]d\alpha =\int_0^1\var\left(F^-_{\bnu}(\alpha) \right)d\alpha.
$$
\end{proof}

\subsection{A deformable model of probability measures} \label{sec:deform}
Let $\bnu$ be a $\WS$-valued square integrable random probability measure. We use the notation $\bF$ and $\bF^{-}$ to denote the cumulative distribution function (cdf) and the quantile function of the random measure $\bnu$. Let us also denote by $\nu_0$  the barycenter of $\bnu$ (the existence and unicity of $\nu_0$ is ensured thanks to Proposition \ref{prop:existence}) and by  $\bnu_1, \ldots,\bnu_n$  independent copies of $\bnu$. In this paper, we consider a  deformable model of random probability measures satisfying the following assumptions:

\begin{hyp}\label{A1}
$\bnu\in\WSac$, a.s.\ and is a square integrable random probability measure in the sense of Definition \ref{def:square}.
\end{hyp}

\begin{hyp}\label{A2}
$\nu_0\in\WSac$.
\end{hyp}

\begin{hyp}\label{A3}
For each $1 \leq i \leq n$,  conditionally on  $\bnu_i$, the observations $X_{i,1},\ldots,X_{i,p_{i}}$ are iid random variables sampled from  $\bnu_i$, where $p_{i} \geq 1$ is a known integer.
\end{hyp}
\begin{rem}
Since $\bnu$ is square integrable, it follows from  Proposition \ref{prop:existence} that
\begin{equation}\label{eq:A3}
\E \left[ \bF^{-}(\alpha)   \right] = F_{0}^{-}(\alpha), \quad  \mbox{for all } \alpha \in ]0,1[, \quad \mbox{ and } \quad  0 \leq \int_{0}^{1}  \var\left(  \bF^{-}  (\alpha) \right)d\alpha < + \infty.
\end{equation}
It should be remarked that  similar assumptions are considered in \cite{Pana15} to characterize a population barycenter in $\WS$  for the purpose of estimating phase and amplitude variations from the observations of multiple point processes. For   examples of parametric models satisfying the Assumptions \ref{A1}-\ref{A3}, we refer to \cite{BK16} and \cite{MR3338645}. The main restriction of this deformable model  is that $\nu_0$ is assumed to be absolutely continuous.
\end{rem}

\subsection{Non-smoothed empirical barycenter} \label{sec:alternative}

To estimate the structural mean measure $\nu_{0}$ from the data $(X_{i,j})_{1 \leq i \leq n; \; 1 \leq j \leq p_{i} }$, a first approach consists in computing straightaway the barycenter of the empirical measure $\tilde{\bnu}_{1},\ldots, \tilde{\bnu}_{n}$ where $\ubar{p}=\left(p_1,\ldots,p_n\right)$, and $\tilde{\bnu}_{i} =\frac{1}{p_i}\sum_{j=1}^{p_i}\delta_{X_{i,j}}$ ($\delta_a$ denotes the Dirac mass at point $x\in\Omega$). The non-smoothed empirical barycenter is thus defined as 
\begin{equation}
\nebpbar = \argmin_{\mu \in \WS} \frac{1}{n} \sum_{i=1}^{n} d_W^{2}(\tilde{\bnu}_{i},\mu). \label{hatbnunp}
\end{equation}
In the case where $p_1 = p_2 = \ldots = p_n = p$, we have the following procedure for computing the non-smoothed empirical barycenter.
 For each $1 \leq i \leq n$, we denote by $X_{i,1}^{\ast} \leq X_{i,2}^{\ast} \leq \ldots \leq X_{i,p}^{\ast} $ the order statistics corresponding to the $i$-th sample of observations $(X_{i,j})_{1 \leq j \leq p }$, and we define
$$
\bar{X}_{j}^{\ast} = \frac{1}{n} \sum_{i=1}^{n} X_{i,j}^{\ast}, \mbox{ for all } 1 \leq j \leq p.
$$
Thanks to Proposition \ref{prop:existence}, the quantile function of the empirical Wasserstein barycenter is the average of the quantile functions of  $\tilde{\bnu}_{1},\ldots, \tilde{\bnu}_{n}$, and thus we obtain the formula 
\begin{equation}
\neb = \frac{1}{p}\sum_{j=1}^p\delta_{\bar{X}_{j}^{\ast}}. \label{eq:nonregestim}
\end{equation}
Note that we use the notation $\neb$ instead of $\nebpbar$ to denote the non-smoothed empirical barycenter  in the case $p_1 = p_2 = \ldots = p_n = p$.

\subsection{ Smoothed empirical barycenter}

An alternative approach is to use a smoothing step to obtain estimated densities and then compute the barycenter. In a first step, to obtain estimators $\hat{\bfun}^{h_1}_{1}, \ldots, \hat{\bfun}^{h_n}_{n}$ of $\bfun_{1},\ldots,\bfun_{n}$, we use kernel smoothing, where  $h_1,\ldots,h_n$ are positive  bandwidth parameters that may be different for each subject or experimental unit. In this paper, to analyze the convergence of smoothed empirical barycenter in $\WS$, we shall investigate a non-standard choice for the kernel function that has been proposed in  \cite{Pana15}. In Section \ref{sec:convrate}, we  give a precise definition of the resulting estimators based on the work in \cite{Pana15}. However, at this point, it is not necessary to go into such details. Then, in a second step, an estimator of $\nu_0$ is  given by $\seb$, with $\ubar{p}=\left(p_1,\ldots, p_n\right)$, defined as the measure whose quantile function is given by
\begin{equation}
\hat{\bF}_{h}^{-}(\alpha) = \frac{1}{n} \sum_{i=1}^{n} \hat{\bF}_{i}^{-}(\alpha), \; \alpha \in [0,1], \label{hatbfunh}
\end{equation}
where $\hat{\bF}_{i}^{-}$ denotes the quantile function of the density $\hat{\bfun}^{h_{i}}_{i}$ for each $1 \leq i \leq n$.  If for each $1 \leq i \leq n$, we denote by  $\hat{\bnu}_{i}^{h_{i}}$ the measure with density $\hat{\bfun}_{i}^{h_{i}}$, then by Proposition \ref{prop:existence}, one has that $\seb$ is also characterized as the following smoothed empirical Wasserstein barycenter
\begin{equation}
\seb = \argmin_{\mu \in \WS} \frac{1}{n} \sum_{i=1}^{n} d_W^{2}(\hat{\bnu}_{i}^{h_{i}},\mu). \label{eq:charahatnu}
\end{equation}

\section{Convergence rate for estimators of the population Wasserstein barycenter} \label{sec:convrate}

In this section, we discuss the rates of convergence of the estimators $\nebpbar$ and  $\seb$, that are respectively characterized by equations \eqref{hatbnunp} and \eqref{eq:charahatnu}.  Some of the results presented below are using the work in \cite{W1} on a detailed study of the variety of rates of convergence of an empirical measure on the real line toward its population counterpart in the Wasserstein metric. Then, we discuss the optimality of these estimators  from the minimax point of view following the  guidelines in nonparametric statistics to derive optimal rates of convergence  (see e.g.\ \cite{MR2724359} for an introduction to this topic).

\subsection{Non-smoothed empirical barycenter in the case of samples of equal size} \label{sec:convratepequal}

Let us first characterize the rate of convergence of $\neb$, in the specific case where  samples of observations per unit are  of equal size, namely when
$
p_1 = p_2 = \ldots = p_n = p.
$
In what follows, we let $Y_1,\ldots,Y_p$ be iid random variables sampled from the population mean measure $\nu_{0}$ (independently of the data $(X_{i,j})_{1 \leq i \leq n; \; 1 \leq j \leq p }$), and we denote by $\bmu_{p} = \frac{1}{p}\sum_{k=1}^{p} \delta_{Y_{k}}$ the corresponding empirical measure. 

\begin{theo} \label{theo:ratehatbnu}
If  Assumptions  \ref{A1}, \ref{A2} and \ref{A3} are satisfied and if  $p_1 = p_2 = \ldots = p_n = p$, then the estimator  $\neb$ satisfies 
\begin{eqnarray}
\E \left[ d_W^{2}(\neb, \nu_{0}) \right] & = & \frac{1}{n}  \int_{0}^{1}  \var\left(  \bF^{-}  (\alpha) \right)d\alpha +  \frac{1}{p n} \sum_{j=1}^{p}  \var \left( Y_{j}^{\ast}  \right) + \sum_{j=1}^{p} \int_{(j-1)/p}^{j/p}   \left( \E \left[ Y_{j}^{\ast}  \right]- F_{0}^{-}(\alpha) \right)^2 d\alpha, \nonumber \\
& = & \frac{1}{n}  \int_{0}^{1}  \var\left(  \bF^{-}  (\alpha) \right)d\alpha +  \frac{1-n}{p n} \sum_{j=1}^{p}  \var \left( Y_{j}^{\ast}  \right) + \E \left[ d_W^{2}(\bmu_{p}, \nu_{0}) \right], \label{eq:exactrate} 
\end{eqnarray}
where $Y_1^{\ast} \leq Y_2^{\ast}  \leq \ldots \leq Y_p^{\ast}$ denote the order statistics of the sample $Y_1,\ldots,Y_p$.
\end{theo}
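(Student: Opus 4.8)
The plan is to reduce everything to the quantile representation of $d_W$ and then perform a bias--variance decomposition order statistic by order statistic. Since the atoms of $\neb$ are the ordered averages $\bar{X}_{1}^{\ast} \leq \cdots \leq \bar{X}_{p}^{\ast}$ (monotonicity is preserved when averaging the already ordered $X_{i,j}^{\ast}$), the quantile function of $\neb$ is the step function equal to $\bar{X}_{j}^{\ast}$ on $((j-1)/p, j/p]$, so Definition \ref{defi:wdist} gives
\begin{equation*}
d_W^{2}(\neb,\nu_{0}) = \sum_{j=1}^{p} \int_{(j-1)/p}^{j/p} \left( \bar{X}_{j}^{\ast} - F_{0}^{-}(\alpha) \right)^{2} d\alpha .
\end{equation*}
Taking expectations (Fubini is licensed by the square-integrability in Assumption \ref{A1} and by $\nu_{0} \in \WS$, which force the relevant second moments to be finite) and writing $\E[(\bar{X}_{j}^{\ast} - F_{0}^{-}(\alpha))^{2}] = \var(\bar{X}_{j}^{\ast}) + (\E[\bar{X}_{j}^{\ast}] - F_{0}^{-}(\alpha))^{2}$ splits the risk into a variance part $\frac{1}{p}\sum_{j} \var(\bar{X}_{j}^{\ast})$ and a bias part $\sum_{j}\int_{(j-1)/p}^{j/p}(\E[\bar{X}_{j}^{\ast}] - F_{0}^{-}(\alpha))^{2}d\alpha$.

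The central device for both parts is the quantile coupling: conditionally on $\bnu_{i}$, I would represent $X_{i,j}^{\ast} = \bF_{i}^{-}(U_{(j)}^{(i)})$, where $U_{(1)}^{(i)} \leq \cdots \leq U_{(p)}^{(i)}$ are the order statistics of a uniform sample on $[0,1]$ drawn independently of $\bnu_{i}$, monotonicity of $\bF_{i}^{-}$ being what makes this the $j$-th order statistic. Likewise $Y_{j}^{\ast} \stackrel{d}{=} F_{0}^{-}(U_{(j)})$. For the bias, independence of $U_{(j)}^{(i)}$ from $\bnu_{i}$ together with $\E[\bF^{-}(u)] = F_{0}^{-}(u)$ (equation \eqref{eq:A3}) yields $\E[X_{i,j}^{\ast}] = \E[F_{0}^{-}(U_{(j)})] = \E[Y_{j}^{\ast}]$, hence $\E[\bar{X}_{j}^{\ast}] = \E[Y_{j}^{\ast}]$, which produces the bias term in the claimed form.

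For the variance part, independence across $i$ gives $\var(\bar{X}_{j}^{\ast}) = \frac{1}{n}\var(X_{1,j}^{\ast})$. I would then decompose $X_{1,j}^{\ast} = F_{0}^{-}(U_{(j)}) + (\bF^{-}(U_{(j)}) - F_{0}^{-}(U_{(j)}))$ with $U_{(j)}$ independent of $\bF^{-}$; the cross term vanishes because $\E[\bF^{-}(u) - F_{0}^{-}(u)] = 0$, leaving $\var(X_{1,j}^{\ast}) = \var(Y_{j}^{\ast}) + \E[(\bF^{-}(U_{(j)}) - F_{0}^{-}(U_{(j)}))^{2}]$. Summing over $j$, the last term equals $\int_{0}^{1}\var(\bF^{-}(u)) \sum_{j} g_{j}(u)\,du$, where $g_{j}$ is the density of the $j$-th uniform order statistic; the identity $\sum_{j=1}^{p} g_{j} \equiv p$ on $(0,1)$ (the ordered and unordered uniform samples share the same empirical measure) collapses this to $p\int_{0}^{1}\var(\bF^{-}(\alpha))d\alpha$. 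Dividing by $pn$ reproduces exactly $\frac{1}{n}\int_{0}^{1}\var(\bF^{-}(\alpha))d\alpha + \frac{1}{pn}\sum_{j}\var(Y_{j}^{\ast})$, which combines with the bias part into the first line of \eqref{eq:exactrate}. The second line then follows by noting that the very same quantile/bias--variance computation applied to $\bmu_{p}$ gives $\E[d_W^{2}(\bmu_{p},\nu_{0})] = \frac{1}{p}\sum_{j}\var(Y_{j}^{\ast}) + \sum_{j}\int_{(j-1)/p}^{j/p}(\E[Y_{j}^{\ast}] - F_{0}^{-}(\alpha))^{2}d\alpha$, and substituting this identity rewrites the coefficient of $\sum_{j}\var(Y_{j}^{\ast})$ from $\frac{1}{pn}$ to $\frac{1-n}{pn}$.

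The step I expect to be the main obstacle is the variance decomposition: one must set up the coupling so that a single uniform order statistic $U_{(j)}$ is shared by the oracle order statistic $Y_{j}^{\ast}$ and the random quantile $\bF^{-}$, keep $U_{(j)}$ independent of $\bnu$ so the cross term is genuinely zero, and then recognize the order-statistic density identity $\sum_{j} g_{j} \equiv p$ that converts the per-$j$ variances into the integrated variance $\int_{0}^{1}\var(\bF^{-})$. Everything else---the step-function quantile, the bias computation, and the final algebraic rearrangement---is bookkeeping once this coupling is in place.
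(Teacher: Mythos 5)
Your proof is correct and follows essentially the same route as the paper: the step-function quantile representation, the per-order-statistic bias--variance split, the identity $\E[\bar{X}_{j}^{\ast}]=\E[Y_{j}^{\ast}]$, and the computation of $\frac{1}{p}\sum_{j}\var(\bar{X}_{j}^{\ast})$ in terms of $\int_{0}^{1}\var(\bF^{-}(\alpha))\,d\alpha$ and $\sum_{j}\var(Y_{j}^{\ast})$ are exactly the paper's Lemma A and its use. Your uniform-order-statistic coupling $X_{i,j}^{\ast}=\bF_{i}^{-}(U_{(j)})$ with the identity $\sum_{j}g_{j}\equiv p$ is just a probabilistic repackaging of the paper's change of variables $\alpha=\bF_{i}(x)$ and the resulting Beta-density weights $\frac{p!}{(j-1)!(p-j)!}\alpha^{j-1}(1-\alpha)^{p-j}$, whose sum over $j$ is $p$ by the binomial theorem.
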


Theorem \ref{theo:ratehatbnu} provides exact formulas to compute the rate of convergence (for the expected squared Wasserstein distance) of $\neb$. Formula \eqref{eq:exactrate} relies on the computation of the variances of the order statistics of  iid variables  $Y_1,\ldots,Y_p$ sampled from the population mean measure $\nu_{0}$, and on the computation of the rate of convergence of $ \E \left[ d_W^{2}(\bmu_{p}, \nu_{0}) \right]$. We discuss below some examples where equality \eqref{eq:exactrate} may be used to derive a sharp rate of convergence for  $\neb$. \\

\noindent {\bf The case where $\nu_{0}$ is the uniform distribution on $[0,1]$.} In this setting,  it is known (see  e.g.\ Section 4.2 in   \cite{W1}) that
$$
 \var \left( Y_{j}^{\ast}  \right)  = \frac{j(p-j+1)}{(p+1)^2(p+2)} \mbox{ and thus } \sum_{j=1}^{p}  \var \left( Y_{j}^{\ast}  \right)  = \frac{p}{6(p+1)}.
$$ 
Moreover,  from Theorem 4.7 in \cite{W1}, it follows that  $\E \left[ d_W^{2}(\bmu_{p}, \nu_{0}) \right] = \frac{1}{6 p}$. Therefore, thanks to equality \eqref{eq:exactrate},  we obtain that
\begin{eqnarray}
\E \left[ d_W^{2}(\neb, \nu_{0}) \right] 
& = &  \frac{1}{n}  \int_{0}^{1}  \var\left(  \bF^{-}  (\alpha) \right)d\alpha +  \frac{1-n}{6 n(p+1)} + \frac{1}{6 p} \nonumber \\
& = &  \frac{1}{n}  \int_{0}^{1}  \var\left(  \bF^{-}  (\alpha) \right)d\alpha  + \frac{1}{6} \left(  \frac{1}{n(p+1)} + \frac{1}{p(p+1)} \right). \label{eq:rateunif}
\end{eqnarray}
Equality \eqref{eq:rateunif} thus shows that, when  $\nu_{0}$ is the uniform distribution on $[0,1]$, the rate of convergence of $\neb$ is of the order
\begin{equation}
\E \left[ d_W^{2}(\neb, \nu_{0}) \right]  \asymp \frac{1}{n} + \frac{1}{np} + \frac{1}{p^2},  \label{eq:ratebarunif}
\end{equation}
and that this rate is sharp. \\

\noindent {\bf The case where $\nu_{0}$ is the one-sided exponential distribution.} From Theorem 4.3 in \cite{W1}, one has that
\begin{equation}
 \frac{1}{2p} \sum_{j=1}^{p}  \var \left( Y_{j}^{\ast}  \right) \leq \E \left[ d_W^{2}(\bmu_{p}, \nu_{0}) \right]  \leq \frac{2}{p} \sum_{j=1}^{p}  \var \left( Y_{j}^{\ast}  \right), \label{eq:boundvartwosided}
\end{equation} 
for any distribution $\nu_{0} \in \WS$. Therefore, combining the above inequalities with \eqref{eq:exactrate}, it follows that
\begin{equation}
 \E \left[ d_W^{2}(\neb, \nu_{0}) \right]  \leq \frac{1}{n}  \int_{0}^{1}  \var\left(  \bF^{-}  (\alpha) \right)d\alpha +  \frac{1+n}{p n} \sum_{j=1}^{p}  \var \left( Y_{j}^{\ast}  \right). \label{eq:boundvar} 
\end{equation} 
Now (using e.g.\ Remark 6.13 in \cite{W1}) one has that if $\nu_{0}$ is the one-sided exponential distribution (with density $e^{-x}$ for $x \geq 0$) then
$$
\sum_{j=1}^{p}  \var \left( Y_{j}^{\ast}  \right) = \sum_{j=1}^{p} \frac{1}{j} \sim \log(p) \mbox{ as } p \to + \infty.
$$
Therefore, there exist a constant $c > 0$ such that
\begin{equation}
 \E \left[ d_W^{2}(\neb, \nu_{0}) \right]  \leq  \frac{1}{n}  \int_{0}^{1}  \var\left(  \bF^{-}  (\alpha) \right)d\alpha  +  c \left(1 + \frac{1}{n}\right) \frac{\log(p)}{p}  \label{eq:ratebarexp}
\end{equation}
for all sufficiently large $p$. Hence, when $\nu_{0}$ is the exponential distribution the above inequalities show that the  rate of convergence of $\neb$ is of the order $\bigO\left( \frac{1}{n} +  \log(p)\left( \frac{1}{np}  + \frac{1}{p} \right) \right)$. \\

\noindent {\bf The case where $\nu_{0}$ is the standard Gaussian distribution.} Deriving a sharp rate of convergence for $\neb$  using inequalities \eqref{eq:exactrate} combined with \eqref{eq:boundvartwosided} requires computing the variances of the order statistics of  iid random variables. To the best of our knowledge, obtaining a sharp estimate for $\var \left( Y_{j}^{\ast}  \right)$ for any $1 \leq j \leq p$ remains a difficult task except for specific distributions. 
Nevertheless,  if $\nu_{0}$ is assumed to be a log-concave measure, then it is possible to use the results in Section 6 of \cite{W1} which provide sharp bounds  on the variances of order statistics for such probability measures. 

For example, if $\nu_{0}$ is the standard Gaussian distribution, then by Theorem 4.3 and Corollary 6.14 
in \cite{W1} we obtain that there exist two constants $c_1,c_2 > 0$ such that
$$
c_1 \frac{\log (\log(p)) }{p} \leq  \frac{1}{p} \sum_{j=1}^{p}  \var \left( Y_{j}^{\ast}  \right) \leq c_2 \frac{\log (\log(p)) }{p}.
$$
Therefore, combining the above upper bound with \eqref{eq:boundvar}, one finally has that 
\begin{equation}
 \E \left[ d_W^{2}(\neb, \nu_{0}) \right]  \leq \frac{1}{n}  \int_{0}^{1}  \var\left(  \bF^{-}  (\alpha) \right)d\alpha +  c_2 \left(\frac{1}{n} + 1 \right)  \frac{\log (\log(p)) }{p}. \label{eq:twosidedboundGaussian} 
\end{equation} 
when $\nu_{0}$ is the standard Gaussian distribution. In this setting, the rate of convergence is thus of the order $\bigO\left( \frac{1}{n} +  \log(\log(p)) \left( \frac{1}{np}  + \frac{1}{p} \right) \right)$. \\

\noindent {\bf Upper bounds in more general cases.}  If one is  interested in deriving an upper bound on $\E \left[ d_W^{2}(\neb, \nu_{0}) \right]$ for a larger class of measures $\nu_{0} \in \WS$ (e.g.\ beyond the log-concave case), another approach is as follows. Noting that the term $\frac{1-n}{p n} \sum_{j=1}^{p}  \var (Y_{j}^{\ast})$ in equality \eqref{eq:exactrate} is negative, a straightforward consequence of Theorem \ref{theo:ratehatbnu}   is the following upper bound
\begin{equation}
\E \left[ d_W^{2}(\neb, \nu_{0}) \right] \leq \frac{1}{n}  \int_{0}^{1}  \var\left(  \bF^{-}  (\alpha) \right)d\alpha + \E \left[ d_W^{2}(\bmu_{p}, \nu_{0}) \right].
\label{eq:upperboundrate}
\end{equation}

Then, thanks to inequality \eqref{eq:upperboundrate}, to derive the rate of convergence of $\neb$, it remains to control the rate of convergence of the empirical measure $\bmu_{p}$ to $\nu_{0}$ for the expected squared Wasserstein distance. This issue is discussed in detail in \cite{W1}. In particular, the work in \cite{W1} describes a variety of rates for the expected distance $\E \left[ d_W^{2}(\bmu_{p}, \nu_{0}) \right]$, from the standard one $\bigO\left( \frac{1}{p} \right)$ to slower rates. For example, by Theorem 5.1 in  \cite{W1}, the following upper bound holds
\begin{equation}
\E \left[ d_W^{2}(\bmu_{p}, \nu_{0}) \right] \leq \frac{2}{p+1} J_{2}(\nu_{0}), \label{eq:upBL}
\end{equation}
where the so-called $J_{2}$-functional is defined as
$$
J_{2}(\nu_{0}) = \int_{\Omega} \frac{F_{0}(x)(1-F_{0}(x))}{f_{0}(x)} dx,
$$
where $f_{0}$ is the density of $\nu_{0}$, and $F_{0}$ denotes its cdf. Therefore, provided that $J_{2}(\nu_{0})$ is finite, the empirical measure $\bmu_{p}$ converges to $\nu_{0}$ at the rate $\bigO\left( \frac{1}{p} \right)$.  Hence, using inequality \eqref{eq:upBL}, we have:

\begin{coro} \label{coro:ratehatbnu}
Suppose that Assumptions   \ref{A1}, \ref{A2} and \ref{A3} are satisfied. Then, the estimator  $\neb$ satisfies
\begin{equation}
\E \left[ d_W^{2}(\neb, \nu_{0}) \right] \leq \frac{1}{n}  \int_{0}^{1}  \var\left(  \bF^{-}  (\alpha) \right)d\alpha + \frac{2}{p+1} J_{2}(\nu_{0}). \label{eq:ratehatbnu}
\end{equation}
\end{coro}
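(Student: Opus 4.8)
The plan is to derive this corollary directly from the exact formula of Theorem \ref{theo:ratehatbnu}, so that all the analytic work has already been carried out upstream and what remains is merely to discard one favorable term and substitute a known bound on the empirical-measure error. Concretely, I would start from the second expression in \eqref{eq:exactrate},
$$
\E \left[ d_W^{2}(\neb, \nu_{0}) \right] = \frac{1}{n}\int_{0}^{1} \var\left(\bF^{-}(\alpha)\right)d\alpha + \frac{1-n}{pn}\sum_{j=1}^{p}\var\left(Y_{j}^{\ast}\right) + \E\left[d_W^{2}(\bmu_{p},\nu_{0})\right],
$$
which is valid under Assumptions \ref{A1}, \ref{A2}, \ref{A3} in the equal-sample-size regime.

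The key observation is that the middle term is nonpositive: since $n \geq 1$ we have $1-n \leq 0$, while each $\var(Y_{j}^{\ast}) \geq 0$, so the entire sum carries a nonpositive prefactor. Dropping it can only increase the right-hand side, which produces the intermediate bound \eqref{eq:upperboundrate},
$$
\E \left[ d_W^{2}(\neb, \nu_{0}) \right] \leq \frac{1}{n}\int_{0}^{1} \var\left(\bF^{-}(\alpha)\right)d\alpha + \E\left[d_W^{2}(\bmu_{p},\nu_{0})\right].
$$
The final step is to control $\E[d_W^{2}(\bmu_{p},\nu_{0})]$, the expected squared Wasserstein error of the empirical measure of $p$ iid draws from $\nu_{0}$. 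For this I would simply invoke the bound \eqref{eq:upBL}, that is Theorem 5.1 in \cite{W1}, which gives $\E[d_W^{2}(\bmu_{p},\nu_{0})] \leq \frac{2}{p+1}J_{2}(\nu_{0})$ in terms of the $J_{2}$-functional of $\nu_{0}$; substituting this into the previous display yields exactly \eqref{eq:ratehatbnu}.

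There is essentially no genuine obstacle in the corollary itself, since all the content sits in Theorem \ref{theo:ratehatbnu} and in the cited estimate from \cite{W1}; the bound is of course informative only when $J_{2}(\nu_{0}) < +\infty$, and otherwise holds trivially. The single point deserving a word of care is the elementary sign check $1-n \leq 0$, which holds for every $n \geq 1$ and guarantees that the variance sum contributes favorably (for $n=1$ the middle term vanishes and the estimate collapses to the one-sample bound). One should also note that the statement is implicitly placed in the equal-size setting $p_1 = \cdots = p_n = p$, since it is phrased in terms of $\neb$ rather than $\nebpbar$, so that Theorem \ref{theo:ratehatbnu} applies verbatim.
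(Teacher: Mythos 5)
Your proposal is correct and matches the paper's own argument exactly: the paper also drops the nonpositive term $\frac{1-n}{pn}\sum_{j=1}^{p}\var(Y_{j}^{\ast})$ from the second line of \eqref{eq:exactrate} to obtain \eqref{eq:upperboundrate}, and then substitutes the bound \eqref{eq:upBL} from Theorem 5.1 of \cite{W1}. Your remarks on the sign check and on the implicit equal-sample-size setting are accurate and consistent with how the paper presents the result.
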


By Corollary \ref{coro:ratehatbnu},  if  $J_{2}(\nu_{0}) < + \infty$, then it follows that $\neb$ converges to $\nu_{0}$ at the rate $\bigO\left( \frac{1}{n} + \frac{1}{p} \right)$. Hence, in the setting where $p \geq n$, $\neb$ converges at the classical parametric rate $\bigO\left( \frac{1}{n}  \right)$, provided that $J_{2}(\nu_{0}) < + \infty$. The case $p \geq n$ is usually refereed to as the dense case in the  literature on functional data analysis (see e.g.\ \cite{li2010} and references therein) which corresponds to the situation where the number of observations per unit/subject is larger than the sample size $n$ of functional objects. In the sparse case (when $p < n$), the non-smoothed Wasserstein barycenter converges at the rate $\bigO\left( \frac{1}{p}  \right)$, provided that $J_{2}(\nu_{0}) < + \infty$. 



\begin{rem}
When $\nu_{0}$ is the uniform distribution on $[0,1]$ one has that $J_{2}(\nu_{0}) < + \infty$, but we have shown that $\E \left[ d_W^{2}(\neb, \nu_{0}) \right]  \asymp \frac{1}{n} + \frac{1}{np} + \frac{1}{p^2}$. Hence, in this setting, $\neb$ converges at the parametric rate $\bigO\left( \frac{1}{n}  \right)$ provided that $p \geq \sqrt{n}$, which is a dense regime condition  weaker than $p \geq n$. 
\end{rem}

To conclude this discussion on the rate of convergence of the non-smoothed Wasserstein barycenter in the case of samples of equal size, we study in more detail the control of the rate of convergence of the term $\E \left[ d_W^{2}(\bmu_{p}, \nu_{0}) \right]$ in inequality \eqref{eq:upperboundrate}. As pointed out in many works (see for example \cite{BGU05,W1} and the references therein) the fact that the functional  $J_{2}(\nu_{0})$  is finite or not is the key point to control the convergence of the empirical measure $\bmu_{p}$ to the population measure $\nu_{0}$ in the Wasserstein space. Some known facts concerning $J_2$ are the following.
\begin{enumerate}
\item If $J_2(\nu_0)<+\infty$ then $\nu_0$ is supported on an interval of $\R$ and its density is a.e.\ strictly positive on this interval.
\item If $\nu_0$ is compactly supported with a density bounded away from zero or with a log-concave density then  $J_2(\nu_0)<+\infty$.
\item If the density of $\nu_0$ is of the form $C_\alpha e^{-|x|^\alpha}$ then $J_2(\nu_0)$ is finite if and only if $\alpha>2$. In particular, $J_2(\nu_0)=+\infty$ for the Gaussian distribution.
\end{enumerate}

Some further comments can be made in the case where $\nu_{0}$ is a Gaussian distribution. In this setting, one has that $J_2(\nu_0)=+\infty$ and the rate of convergence of $\E \left[ d_W^{2}(\bmu_{p}, \nu_{0}) \right]$ to zero is slower than $\bigO\left( \frac{1}{p}  \right)$. Indeed, from Corollary 6.14 in  \cite{W1}, if $\nu_{0}$ is the standard Gaussian distribution, then there exist two constants $c_1,c_2 > 0$ such that
\begin{equation}
c_1 \frac{\log (\log(p)) }{p} \leq \E \left[ d_W^{2}(\bmu_{p}, \nu_{0}) \right] \leq c_2 \frac{\log (\log(p)) }{p}. \label{eq:rateGauss}
\end{equation}
Hence, using again inequality \eqref{eq:upperboundrate} combined with the above upper bound, we have:
\begin{coro} \label{coro:ratehatbnuGauss}
Suppose that Assumptions   \ref{A1}, \ref{A2} and \ref{A3} are satisfied. If  $\nu_{0}$ is the standard Gaussian distribution, then the estimator  $\neb$ satisfies
\begin{equation}
\E \left[ d_W^{2}(\neb, \nu_{0}) \right] \leq \frac{1}{n}  \int_{0}^{1}  \var\left(  \bF^{-}  (\alpha) \right)d\alpha + c \frac{\log (\log(p)) }{p},
\end{equation}
for some numerical constant $c  > 0$.
\end{coro}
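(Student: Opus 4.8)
The plan is to treat this as an immediate specialization of Theorem \ref{theo:ratehatbnu}, obtained exactly as inequality \eqref{eq:upperboundrate} was derived, followed by substituting the known Gaussian rate \eqref{eq:rateGauss} for the empirical-measure term. First I would note that the standard Gaussian distribution is admissible: it lies in $\WSac$, so Assumption \ref{A2} holds, and it has finite second moment, so $\bmu_{p}$ (the empirical measure of a $\nu_0$-sample) is well defined with $\E[d_W^2(\bmu_p,\nu_0)]<\infty$.

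Next I would start from the exact identity \eqref{eq:exactrate}. Since $n \geq 1$, the middle term $\frac{1-n}{pn}\sum_{j=1}^p \var(Y_j^{\ast})$ is non-positive, because $1-n\leq 0$ while the variances of the order statistics are non-negative and $pn>0$. Discarding it yields precisely the upper bound \eqref{eq:upperboundrate},
\[
\E\left[ d_W^2(\neb,\nu_0) \right] \leq \frac{1}{n}\int_0^1 \var\left(\bF^-(\alpha)\right)d\alpha + \E\left[ d_W^2(\bmu_p,\nu_0) \right].
\]
I would then invoke the second inequality in \eqref{eq:rateGauss}, which is Corollary 6.14 of \cite{W1} specialized to the standard Gaussian: there is a constant $c_2>0$ with $\E[d_W^2(\bmu_p,\nu_0)] \leq c_2\,\log(\log(p))/p$ for all sufficiently large $p$. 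Substituting this into the display above gives the stated bound with $c=c_2$. For the finitely many small values of $p$ where the $\log\log$ factor degenerates, the left-hand side remains finite (again by the finite second moment of $\nu_0$), so $c$ may be enlarged to absorb these cases.

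I do not anticipate any genuine obstacle here, since the whole difficulty has been front-loaded: the exact risk decomposition is supplied by Theorem \ref{theo:ratehatbnu}, and the sharp $\log(\log(p))/p$ control of $\E[d_W^2(\bmu_p,\nu_0)]$ for the Gaussian is imported from \cite{W1}. The only points needing a line of care are the sign of the discarded term and the admissibility check for $\nu_0$, both of which are immediate; the argument is therefore essentially a two-inequality concatenation.
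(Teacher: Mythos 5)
Your proposal matches the paper's own proof: the authors also obtain the bound by discarding the non-positive middle term of \eqref{eq:exactrate} to get \eqref{eq:upperboundrate} and then substituting the Gaussian estimate \eqref{eq:rateGauss} from Corollary 6.14 of \cite{W1}. Your extra remarks on admissibility of $\nu_0$ and on absorbing small $p$ into the constant are harmless refinements of the same two-step argument.
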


Hence by Corollary \ref{coro:ratehatbnuGauss}, if $p$ is sufficiently large with respect to $n$ (namely when $p \geq n  \log(\log(p))$), then $\neb$ also converges at the classical parametric rate $\bigO\left( \frac{1}{n}  \right)$ when $\nu_{0}$ is the standard Gaussian distribution.

\begin{rem}
Following the work of \cite{W1}, if $\nu_{0}$ is a log-concave distribution, then one may obtain rates of convergence for $\E \left[ d_W^{2}(\neb, \nu_{0}) \right]$ that are slower than the standard  $\bigO\left( \frac{1}{p}  \right)$ rate (e.g.\ for beta or exponential distributions).   Moreover, it is also possible to considerer for any $q \geq 1$ and for any  probability measure  $\mu$   on the real line (with density $f$ and distribution function $F$) the functional
$$
J_q(\mu)=\int_\R\frac{\left(F(x)(1-F(x))\right)^{q/2}}{f(x)^{q-1}} dx
$$
in order to control the rate of convergence of the empirical measure to $\mu$ for the $q$-Wasserstein distance. 
\end{rem}

\subsection{Non-smoothed empirical barycenter in the general case}

Let us now consider the general situation where the $p_i$'s are possibly different. The result below gives an upper bound on the rate of convergence of $\nebpbar$ where $\ubar{p}=\left(p_1,\ldots,p_n\right)$. 

\begin{theo} \label{theo:ratehatbnupdiff}
Suppose that  Assumptions   \ref{A1}, \ref{A2} and \ref{A3} are satisfied. Then,
$$
\E \left[ d_W(\nebpbar, \nu_{0}) \right] \leq n^{-1/2} \sqrt{  \int_{0}^{1} \var\left(  \bF^{-}  (\alpha) \right)d\alpha } +  \frac{1}{n} \sum_{i=1}^{n} \sqrt{ \E \left[ d^2_W(\tilde{\bnu}_{i}, \bnu_{i}) \right] },
$$
where $\tilde{\bnu}_{i} = \frac{1}{p_{i}} \sum_{j=1}^{p_{i}} \delta_{X_{i,j}}$ for each $1 \leq i \leq n$
\end{theo}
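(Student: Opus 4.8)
The plan is to exploit the fact that the quadratic Wasserstein distance on $\WS$ is, via the quantile map $\nu \mapsto F_{\nu}^{-}$, an isometric embedding into $L^2([0,1])$, so that the whole argument reduces to the triangle inequality in a Hilbert space. By Proposition \ref{prop:existence} applied to the empirical distribution $\frac{1}{n}\sum_{i=1}^n \delta_{\tilde{\bnu}_i}$, the quantile function of $\nebpbar$ is the average $F_{\nebpbar}^{-} = \frac{1}{n}\sum_{i=1}^n F_{\tilde{\bnu}_i}^{-}$; likewise $F_{\nu_0}^{-} = F_0^{-} = \E[\bF^{-}]$ by Proposition \ref{prop:existence}(ii). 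First I would write
$$
d_W(\nebpbar,\nu_0) = \left\| \frac{1}{n}\sum_{i=1}^n F_{\tilde{\bnu}_i}^{-} - F_0^{-} \right\|_{L^2([0,1])}
$$
and split the integrand as $\frac{1}{n}\sum_i\bigl(F_{\bnu_i}^{-} - F_0^{-}\bigr) + \frac{1}{n}\sum_i\bigl(F_{\tilde{\bnu}_i}^{-} - F_{\bnu_i}^{-}\bigr)$. The triangle inequality in $L^2([0,1])$ then bounds $d_W(\nebpbar,\nu_0)$ by the sum of the $L^2$ norms of these two pieces, after which I take expectations.

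For the first (``variance'') term, I would apply Jensen's inequality to move the expectation inside the square root, giving $\E[\|\cdot\|_{L^2}] \le (\E[\|\cdot\|_{L^2}^2])^{1/2}$, and then compute $\E[\|\cdot\|_{L^2}^2]$ by Fubini. Since the $\bnu_i$ are independent copies of $\bnu$ and $\E[\bF^{-}(\alpha)] = F_0^{-}(\alpha)$ for each $\alpha$ (so the summands $F_{\bnu_i}^{-}(\alpha) - F_0^{-}(\alpha)$ are i.i.d.\ and centered), the cross terms vanish and
$$
\E\left[\left(\frac{1}{n}\sum_{i=1}^n \bigl(F_{\bnu_i}^{-}(\alpha) - F_0^{-}(\alpha)\bigr)\right)^2\right] = \frac{1}{n}\var\bigl(\bF^{-}(\alpha)\bigr).
$$
Integrating in $\alpha$ yields exactly $\frac{1}{n}\int_0^1 \var(\bF^{-}(\alpha))\,d\alpha$, whose square root is the first term of the bound.

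For the second (``sampling'') term, I would use the triangle inequality again, this time for the finite sum inside the $L^2$ norm, to get $\|\frac{1}{n}\sum_i(F_{\tilde{\bnu}_i}^{-} - F_{\bnu_i}^{-})\|_{L^2} \le \frac{1}{n}\sum_i \|F_{\tilde{\bnu}_i}^{-} - F_{\bnu_i}^{-}\|_{L^2} = \frac{1}{n}\sum_i d_W(\tilde{\bnu}_i,\bnu_i)$. Taking expectations and applying Jensen's inequality to each summand gives $\E[d_W(\tilde{\bnu}_i,\bnu_i)] \le (\E[d_W^2(\tilde{\bnu}_i,\bnu_i)])^{1/2}$, which produces the second term. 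Combining the two bounds completes the proof. The argument is essentially soft once the isometry is in place; the only point demanding care is the bookkeeping of the two independent sources of randomness (the random measures $\bnu_i$ and, conditionally on them, the samples $X_{i,j}$). In particular I must make sure that the centering identity $\E[\bF^{-}(\alpha)] = F_0^{-}(\alpha)$ and the vanishing of the cross terms are justified by the independence of the $\bnu_i$, and that Fubini's theorem legitimately interchanges the expectation with the integration over $[0,1]$, which is guaranteed by the measurability assumption stated in the remark following Definition \ref{defi:barW2} together with square-integrability.
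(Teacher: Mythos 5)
Your proposal is correct and follows essentially the same route as the paper: the paper also introduces the intermediate measure $\bnu_{n}^{\oplus}$ with quantile function $\frac{1}{n}\sum_{i}\bF_{i}^{-}$ (which is exactly your additive splitting of the integrand in $L^{2}([0,1])$), bounds the ``variance'' piece by computing $\E[d_W^2(\bnu_n^{\oplus},\nu_0)]=\frac{1}{n}\int_0^1\var(\bF^{-}(\alpha))\,d\alpha$ via independence and Jensen, and bounds the ``sampling'' piece by the triangle inequality in $L^2$ followed by Jensen on each summand. No gaps.
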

For the random measure $\bnu$, we define the random variable 
$$
J_{2}(\bnu) = \int_{\Omega} \frac{\bF(x)(1-\bF(x))}{\bfun(x)} dx.
$$
Since the $\bnu_{i}$'s are independent copies of $\bnu$ by applying inequality \eqref{eq:upBL}, it follows that
$$
\sqrt{ \E \left[ d^2_W(\tilde{\bnu}_{i}, \bnu_{i}) \right] } \leq \sqrt{2     \E \left[ J_{2}(\bnu) \right] } p_{i}^{-1/2}.
$$
Hence,  from Theorem \ref{theo:ratehatbnupdiff}, we finally obtain the following upper bound on the rate of convergence for the non-smoothed empirical barycenter

\begin{coro} \label{coro:ratehatbnupdiff}
Suppose that  Assumptions   \ref{A1}, \ref{A2} and \ref{A3} are satisfied. 
 If $J_{2}(\bnu)$ has a finite expectation, then
$$
\E \left[ d_W(\nebpbar, \nu_{0}) \right] \leq n^{-1/2} \sqrt{  \int_{0}^{1} \var\left(  \bF^{-}  (\alpha) \right)d\alpha } + \sqrt{2  \E \left[ J_{2}(\bnu) \right] } \left( \frac{1}{n} \sum_{i=1}^{n} p_{i}^{-1/2} \right).
$$
\end{coro}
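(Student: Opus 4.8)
The plan is to combine Theorem~\ref{theo:ratehatbnupdiff}, which I take as given, with the deterministic bound \eqref{eq:upBL} applied conditionally on each $\bnu_i$. Theorem~\ref{theo:ratehatbnupdiff} already produces the first term $n^{-1/2}\sqrt{\int_0^1 \var(\bF^-(\alpha))\,d\alpha}$ exactly as it appears in the statement, so the only work left is to bound the second term $\frac{1}{n}\sum_{i=1}^n \sqrt{\E[d_W^2(\tilde\bnu_i,\bnu_i)]}$ by $\sqrt{2\,\E[J_2(\bnu)]}\,\frac{1}{n}\sum_{i=1}^n p_i^{-1/2}$ and substitute.

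First I would fix $1 \le i \le n$ and argue conditionally on $\bnu_i$. By Assumption~\ref{A1}, $\bnu_i \in \WSac$ almost surely, so it admits a density and the random variable $J_2(\bnu_i)$ is well defined; by Assumption~\ref{A3}, conditionally on $\bnu_i$ the variables $X_{i,1},\ldots,X_{i,p_i}$ are iid with law $\bnu_i$, so $\tilde\bnu_i$ is precisely the empirical measure of $p_i$ iid draws from $\bnu_i$. This is exactly the setting of \eqref{eq:upBL}, which applied with target $\bnu_i$ and sample size $p_i$ gives, conditionally,
\[
\E\left[ d_W^2(\tilde\bnu_i,\bnu_i) \mid \bnu_i \right] \le \frac{2}{p_i+1}\, J_2(\bnu_i) \le \frac{2}{p_i}\, J_2(\bnu_i).
\]
Taking expectations and invoking the tower property, together with the facts that $p_i$ is deterministic and $\bnu_i$ has the same law as $\bnu$, yields $\E[d_W^2(\tilde\bnu_i,\bnu_i)] \le \frac{2}{p_i}\,\E[J_2(\bnu)]$, which is finite by hypothesis. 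Taking square roots, averaging over $i$, and plugging the result into Theorem~\ref{theo:ratehatbnupdiff} then gives the claimed inequality.

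The only step requiring real care is the conditional use of \eqref{eq:upBL}, since that bound is stated for a deterministic target measure whereas here the target $\bnu_i$ is itself random. Making this rigorous relies on the standing measurability convention recorded after Definition~\ref{defi:barW2} (so that the relevant maps into $\WS$ are measurable and the tower property/Fubini apply) and on the hypothesis $\E[J_2(\bnu)] < +\infty$ to ensure integrability throughout. Beyond this bookkeeping I do not expect any genuine obstacle, as the conclusion is essentially an assembly of the two previously established estimates.
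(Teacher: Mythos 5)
Your proposal is correct and matches the paper's own argument: the paper likewise applies inequality \eqref{eq:upBL} conditionally on each $\bnu_i$ to get $\E\left[ d_W^2(\tilde{\bnu}_i,\bnu_i)\right] \leq 2\,\E\left[J_2(\bnu)\right] p_i^{-1}$ and then substitutes into Theorem \ref{theo:ratehatbnupdiff}. Your extra care about the conditional application and the tower property is exactly the (implicit) content of the paper's one-line derivation, so there is nothing to add.
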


From Corollary \ref{coro:ratehatbnupdiff}, one has that if $\min_{1 \leq i \leq n} p_{i} \geq n$ (dense case), then $ \frac{1}{n} \sum_{i=1}^{n} p_{i}^{-1/2} \leq n^{-1/2}$, and thus, the non-smoothed empirical barycenter converges of the parametric rate $n^{-1/2}$ (provided that $\E\left[J_{2}(\bnu) \right]<+\infty$), namely
\begin{equation}
\E \left[ d_W(\nebpbar, \nu_{0}) \right] \leq  \left( \sqrt{  \int_{0}^{1} \var\left(  \bF^{-}  (\alpha) \right)d\alpha } + \sqrt{2  \E \left[ J_{2}(\bnu) \right) } \right) n^{-1/2}. \label{eq:nonsmoothparamrate}
\end{equation}

\begin{rem}
Knowing if  $J_{2}(\bnu)$ has a finite expectation is in general a difficult task. But, if we assume that the density $\bfun$ of $\bnu$ is  bounded below by a non-random positive constant then (obviously) $\E\left[J_{2}(\bnu) \right]<+\infty$.
\end{rem}

%
\subsection{The case of smoothed empirical barycenters}

In this section, we assume that $\Omega=[0,1]$ and we discuss the rate of convergence of smoothed empirical barycenters  $\seb$ (note that the following results hold if $\Omega$ is any compact interval). 

To choose an appropriate kernel function to study the convergence rate of the estimator $\seb$, we follow the proposal made in \cite{Pana15}. We let $\psi$ be a positive, smooth and symmetric density on the real line, such that  $\int_{\R} x^2 \psi(x) dx = 1$. We also denote by $\Psi$ the  cdf of the density $\psi$ and, for a bandwidth parameter $h > 0$, we let $\psi_{h}(x) = \frac{1}{h} \psi \left( \frac{x}{h} \right)$. Then, for any  $y \in [0,1]$ and $h > 0$, we denote by $\mu_{h}^{y}$ the measure supported on $[0,1]$ whose density $f_{\mu_{h}^{y}}$ is defined as 
\begin{equation} \label{eq:kernchoice}
f_{\mu_{h}^{y}}(x) = \psi_{h}(x-y) + 2 b_2  \psi_{h}(x-y) \1_{\{x-y  > 0\}} + 2 b_1 \psi_{h}(x-y) \1_{\{x-y  < 0\}} + 4b_1 b_2, \quad x \in [0,1],
\end{equation}
where $b_1 = 1 - \Psi \left( (1-y)/h \right)$ and $b_2 = \Psi \left( - y / h \right)$. Then, for each $1 \leq i \leq n$, we construct a  kernel density estimator of $\bfun_{i}$ by defining $\hat{\bfun}^{h_{i}}_{i}$ as the density associated to the measure
\begin{equation}
\hat{\bnu}^{h_{i}}_{i} =  \frac{1}{p_{i}} \sum_{j=1}^{p_{i}}  \mu_{h_{i}}^{X_{i,j}},  \label{eq:kernsmooth}
\end{equation}
where $h_{i} > 0$ is a bandwidth parameter depending on $i$. For a discussion on the intuition for this choice of kernel smoothing, we refer to \cite{Pana15}. A key property to analyze the convergence rate of $\seb$ is the following lemma which relates the Wasserstein distance between $\hat{\bnu}^{h_{i}}_{i}$ and the empirical measure $\tilde{\bnu}_{i} = \frac{1}{p_{i}} \sum_{j=1}^{p_{i}} \delta_{X_{i,j}}$. 

\begin{lem} \label{lemma:kernsmooth}
Let $1 \leq i \leq n$. Suppose that $0 < h_{i} \leq 1/4$, then one has the following upper bound 
\begin{equation}
d_W^{2}(\hat{\bnu}^{h_{i}}_{i}, \tilde{\bnu}_{i} ) \leq 3 h_i^2 + 4  \Psi ( - 1 / \sqrt{h_i} ),\qquad  1\leq i\leq n.\label{eq:boundkernsmooth}
\end{equation}
 Furthermore, if there exist  constants $C > 0$ and $\alpha \geq 5$ satisfying
\begin{equation}
\psi(x) \leq C x^{- \alpha}, \mbox{ for all sufficiently large } x,  \label{eq:asspsi}
\end{equation}
then for $h_i$ small enough 
$$
d_W^{2}(\hat{\bnu}^{h_{i}}_{i}, \tilde{\bnu}_{i} ) \leq C_{\psi} h_i^2,
$$
for some constant $C_{\psi} > 0$ depending only on $\psi$.
\end{lem}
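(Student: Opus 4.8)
The plan is to reduce the bound to a single kernel bump and then to a pointwise second-moment estimate. Since both $\hat{\bnu}^{h_i}_i = \frac{1}{p_i}\sum_{j=1}^{p_i}\mu_{h_i}^{X_{i,j}}$ and $\tilde{\bnu}_i = \frac{1}{p_i}\sum_{j=1}^{p_i}\delta_{X_{i,j}}$ are mixtures with the \emph{same} weights $1/p_i$, gluing the optimal couplings of the pairs $(\mu_{h_i}^{X_{i,j}},\delta_{X_{i,j}})$ produces a (generally suboptimal) coupling of the mixtures, which yields the convexity inequality
\[
d_W^2(\hat{\bnu}^{h_i}_i,\tilde{\bnu}_i)\ \le\ \frac{1}{p_i}\sum_{j=1}^{p_i} d_W^2\!\left(\mu_{h_i}^{X_{i,j}},\delta_{X_{i,j}}\right).
\]
Because the second marginal is a Dirac mass, the only admissible coupling sends all mass to the atom, so $d_W^2(\mu_h^y,\delta_y)=\int_0^1 (x-y)^2 f_{\mu_h^y}(x)\,dx$ is exactly the second moment of $\mu_h^y$ about $y$. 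It therefore suffices to bound this integral by $3h^2+4\Psi(-1/\sqrt{h})$ uniformly in $y\in[0,1]$, since the right-hand side is deterministic and averaging over $j$ preserves it.

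Next I would split the integral according to the four summands of \eqref{eq:kernchoice} and substitute $u=(x-y)/h$. The leading term contributes $h^2\int_{-y/h}^{(1-y)/h}u^2\psi(u)\,du\le h^2$, using $\int_\R u^2\psi=1$; the two half-kernel corrections contribute at most $b_2 h^2$ and $b_1 h^2$ respectively, via $\int_0^\infty u^2\psi=\tfrac12$ (symmetry of $\psi$); and the flat term contributes $4b_1b_2\int_0^1(x-y)^2\,dx\le 4b_1b_2$. Since $\Psi$ is the cdf of a symmetric density, $b_1=1-\Psi((1-y)/h)\le\tfrac12$ and $b_2=\Psi(-y/h)=1-\Psi(y/h)\le\tfrac12$, so the first three terms sum to at most $2h^2\le 3h^2$.

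The crux is the product $b_1b_2$, which individually is only $O(1)$ and does not vanish with $h$. The point is the elementary geometric fact that for $h\le 1/4$ the center $y$ cannot lie within distance $\sqrt{h}$ of both endpoints $0$ and $1$: if $y\le\tfrac12$ then $(1-y)/h\ge 1/(2h)\ge 1/\sqrt{h}$, whence $b_1\le\Psi(-1/(2h))\le\Psi(-1/\sqrt h)$, and symmetrically for $y\ge\tfrac12$ one controls $b_2$. In either case $b_1b_2\le\tfrac12\Psi(-1/\sqrt h)$, so $4b_1b_2\le 2\Psi(-1/\sqrt h)\le 4\Psi(-1/\sqrt h)$, which assembles to \eqref{eq:boundkernsmooth}. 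Finally, under the tail condition \eqref{eq:asspsi} I would estimate $\Psi(-1/\sqrt h)=\int_{1/\sqrt h}^\infty\psi(u)\,du\le\frac{C}{\alpha-1}h^{(\alpha-1)/2}$ for $h$ small enough that $1/\sqrt h$ lies in the polynomial-tail regime; since $\alpha\ge 5$ gives $(\alpha-1)/2\ge 2$, this is $\le\frac{C}{\alpha-1}h^2$ for $h\le 1$, and combining with the first bound yields $d_W^2(\hat{\bnu}^{h_i}_i,\tilde{\bnu}_i)\le C_\psi h_i^2$ with $C_\psi=3+\tfrac{4C}{\alpha-1}$. The main obstacle, and the one genuinely using the boundary correction in \eqref{eq:kernchoice}, is this control of $b_1b_2$; the remaining estimates are routine substitutions.
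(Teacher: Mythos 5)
Your proof is correct, but it is more self-contained than the paper's: for the first bound \eqref{eq:boundkernsmooth} the authors simply invoke Lemma~1 of Panaretos--Zemel \cite{Pana15} (together with the symmetry of $\psi$) and prove nothing themselves, whereas you reconstruct that imported estimate from scratch. Your route --- convexity of $d_W^2$ under equal-weight mixtures, the observation that $d_W^2(\mu_h^y,\delta_y)$ is just the second moment of $\mu_h^y$ about $y$ because the only coupling with a Dirac is the trivial one, the term-by-term substitution $u=(x-y)/h$ in the four pieces of \eqref{eq:kernchoice}, and the geometric observation that $h\le 1/4$ forces at least one of $(1-y)/h,\,y/h$ to exceed $1/\sqrt{h}$ so that $b_1b_2\le\tfrac12\Psi(-1/\sqrt{h})$ --- checks out at every step (in fact you obtain the slightly stronger bound $2h^2+2\Psi(-1/\sqrt h)$), and it correctly isolates the boundary-correction term $4b_1b_2$ as the only non-routine part. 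The second half of your argument, bounding $\Psi(-1/\sqrt{h})$ by $\frac{C}{\alpha-1}h^{(\alpha-1)/2}\le\frac{C}{\alpha-1}h^2$ under \eqref{eq:asspsi} with $\alpha\ge 5$, is exactly the computation the paper performs. What your version buys is independence from the external reference and an explicit constant $C_\psi=3+\frac{4C}{\alpha-1}$; what the paper's version buys is brevity.
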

\begin{proof}
The upper bound \eqref{eq:boundkernsmooth} follows immediately from Lemma 1 in \cite{Pana15} and the symmetry of $\psi$. Then, by applying inequality \eqref{eq:asspsi} and since $\psi$ is symmetric, it follows that for $h$ small enough
$$
\Psi (- 1 / \sqrt{h} ) = \int_{-\infty}^{- 1 / \sqrt{h} } \psi(x) dx =  \int_{1 / \sqrt{h}}^{+ \infty } \psi(x) dx \leq C \int_{1 / \sqrt{h}}^{+ \infty } x^{- \alpha} dx = \frac{C}{\alpha - 1} h^{(\alpha -1)/2}.
$$
Hence, the second part of Lemma \ref{lemma:kernsmooth} is a consequence of the above inequality, the fact that $\alpha \geq 5$, and the upper bound \eqref{eq:boundkernsmooth}, which completes the proof.
\end{proof}

The result below gives a rate of convergence for the estimator $\seb$. 

\begin{theo} \label{theo:ratehatbnuh}
Suppose that  Assumptions   \ref{A1}, \ref{A2} and \ref{A3} are satisfied, and  that the density $\psi$, used to define kernel smoothing in \eqref{eq:kernsmooth}, satisfies inequality \eqref{eq:asspsi}. If $J_{2}(\bnu)$ has a finite expectation, and the bandwidth parameters $h_i$ are
small enough, then we have
\begin{equation}
\E \left[ d_W(\seb, \nu_{0}) \right] \leq n^{-1/2} \sqrt{  \int_{0}^{1} \var\left(  \bF^{-}  (\alpha) \right)d\alpha } + C_{\psi}^{1/2} \left( \frac{1}{n} \sum_{i=1}^{n} h_{i} \right) + \sqrt{2  \E \left[ J_{2}(\bnu) \right] } \left( \frac{1}{n} \sum_{i=1}^{n} p_{i}^{-1/2} \right). \label{eq:boundsmooth}
\end{equation}
\end{theo}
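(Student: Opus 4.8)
The plan is to build directly on Theorem \ref{theo:ratehatbnupdiff}, using the triangle inequality in the Wasserstein space to separate the cost of smoothing from the convergence of the non-smoothed barycenter. Writing $d_W(\seb,\nu_0) \le d_W(\seb,\nebpbar) + d_W(\nebpbar,\nu_0)$ and taking expectations, the second term is controlled by Theorem \ref{theo:ratehatbnupdiff}, which already delivers the variance term $n^{-1/2}\bigl(\int_0^1 \var(\bF^{-}(\alpha))\,d\alpha\bigr)^{1/2}$ together with the residual $\frac1n\sum_{i=1}^n (\E[d_W^2(\tilde{\bnu}_{i},\bnu_{i})])^{1/2}$. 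So the two things left to do are: (a) bound the smoothing discrepancy $d_W(\seb,\nebpbar)$ by $C_{\psi}^{1/2}\frac1n\sum_{i=1}^n h_i$; and (b) convert the residual sampling error into the $p_i^{-1/2}$ term via \eqref{eq:upBL}.

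For (a), the key point is that in dimension one the Wasserstein barycenter is linear in quantile space: by Proposition \ref{prop:existence} the quantile function of $\seb$ is $\frac1n\sum_{i=1}^n \hat{\bF}_{i}^{-}$ and that of $\nebpbar$ is $\frac1n\sum_{i=1}^n \tilde{\bF}_{i}^{-}$, where $\hat{\bF}_{i}^{-}$ and $\tilde{\bF}_{i}^{-}$ are the quantile functions of $\hat{\bnu}_{i}^{h_{i}}$ and $\tilde{\bnu}_{i}$. Since $d_W$ is the $L^2([0,1])$ distance between quantile functions, the triangle inequality and homogeneity of the $L^2$ norm give
$$
d_W(\seb,\nebpbar) = \Bigl\| \tfrac1n\sum_{i=1}^n (\hat{\bF}_{i}^{-} - \tilde{\bF}_{i}^{-})\Bigr\|_{L^2([0,1])} \le \frac1n\sum_{i=1}^n d_W(\hat{\bnu}_{i}^{h_{i}},\tilde{\bnu}_{i}).
$$
Because the bandwidths $h_i$ are deterministic and satisfy the smallness hypothesis, and $\psi$ obeys the tail bound \eqref{eq:asspsi}, the second (clean) part of Lemma \ref{lemma:kernsmooth} applies pathwise, giving $d_W(\hat{\bnu}_{i}^{h_{i}},\tilde{\bnu}_{i}) \le C_{\psi}^{1/2} h_i$ surely, hence $\E[d_W(\seb,\nebpbar)] \le C_{\psi}^{1/2}\frac1n\sum_{i=1}^n h_i$ with no expectation left to estimate.

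For (b), I would condition on $\bnu_{i}$: given $\bnu_{i}$, the measure $\tilde{\bnu}_{i}$ is the empirical measure of $p_i$ iid draws from $\bnu_{i}$, so \eqref{eq:upBL} yields $\E[d_W^2(\tilde{\bnu}_{i},\bnu_{i})\mid\bnu_{i}] \le \frac{2}{p_i+1}J_{2}(\bnu_{i})$; taking the outer expectation and using that the $\bnu_{i}$ are copies of $\bnu$ gives $\E[d_W^2(\tilde{\bnu}_{i},\bnu_{i})] \le 2\E[J_{2}(\bnu)]\,p_i^{-1}$, whence $(\E[d_W^2(\tilde{\bnu}_{i},\bnu_{i})])^{1/2} \le (2\E[J_{2}(\bnu)])^{1/2}p_i^{-1/2}$. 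Summing the three contributions and collecting terms produces \eqref{eq:boundsmooth}. The proof is short once the machinery is in place; the only real care is conceptual rather than computational: recognizing that the smoothing error must be handled by the deterministic pathwise bound of Lemma \ref{lemma:kernsmooth} together with the triangle inequality (the differences $\hat{\bF}_{i}^{-} - \tilde{\bF}_{i}^{-}$ are biased, so no independence/variance argument can improve them), whereas only the genuinely centered fluctuation $\frac1n\sum_{i=1}^n\bF_{i}^{-} - \E[\bF^{-}]$ buys the $n^{-1/2}$ rate already packaged inside Theorem \ref{theo:ratehatbnupdiff}.
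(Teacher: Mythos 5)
Your proof is correct and follows essentially the same route as the paper's: the paper pivots the triangle inequality on $\bnu_{n}^{\oplus}$ (the barycenter of the unobserved $\bnu_{i}$) and then splits each $d_W(\hat{\bnu}^{h_{i}}_{i},\bnu_{i})$ into a smoothing error plus a sampling error, whereas you pivot on $\nebpbar$ and absorb the sampling error into Theorem \ref{theo:ratehatbnupdiff}; the three resulting terms and the tools used to bound them (quantile-averaging linearity from Proposition \ref{prop:existence}, the pathwise bound of Lemma \ref{lemma:kernsmooth}, and inequality \eqref{eq:upBL} after conditioning on $\bnu_{i}$) are identical. The only difference is that your grouping reuses Theorem \ref{theo:ratehatbnupdiff} as a black box, which is a purely cosmetic gain in modularity.
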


Theorem \ref{theo:ratehatbnuh} can then be used to discuss choices of bandwidth parameters that may lead to a parametric rate of convergence. For example, if $0 < h_{i} \leq n^{-1/2}$ for all $1 \leq i \leq n$ and $\min_{1 \leq i \leq n} p_{i} \geq n$ (dense case), then Theorem \ref{theo:ratehatbnuh} implies that (for all sufficiently large $n$ to ensure that $\max_{1\leq i\leq n}\{h_i\}$ is small enough)
\begin{equation}
\E \left[ d_W(\seb, \nu_{0}) \right] \leq  \left( \sqrt{  \int_{0}^{1} \var\left(  \bF^{-}  (\alpha) \right)d\alpha } + C_{\psi}^{1/2}   + \sqrt{2  \E \left[ J_{2}(\bnu) \right) } \right) n^{-1/2}. \label{eq:paramrate}
\end{equation}

\begin{rem}
In the dense case (namely $\min_{1 \leq i \leq n} p_{i} \geq n$), by comparing the upper bounds \eqref{eq:nonsmoothparamrate} and \eqref{eq:paramrate}, it can be seen that  a preliminary smoothing step of the data (namely kernel smoothing the empirical measures $\tilde{\bnu}_{i} = \frac{1}{p_{i}} \sum_{j=1}^{p_{i}} \delta_{X_{i,j}}$) does not improve the parametric rate of convergence $n^{-1/2}$. Moreover, the bandwidths values has to be small  to ensure  the rate of convergence $n^{-1/2}$ for $\E \left[ d_W(\seb, \nu_{0}) \right]$. This result comes from the fact that  we evaluate the risk of empirical barycenters at the level of measures in $\WS$, and that we do not aim to control an estimation of the density $f_{0}$ of the population mean measure $\nu_{0}$.
\end{rem}

\begin{rem}
Theorem \ref{theo:ratehatbnuh} shares similarities with the results from Theorem 2 in \cite{Pana15} which gives the rate of convergence for smoothed Wasserstein barycenters computed from the realizations of multiple Poisson processes in a deformable model of measures similar to that of this paper. The main difference in  \cite{Pana15} is that the number $\bp_{i}$ of observations for each experimental unit are independent Poisson random variables with expectation $\E(\bp_{i}) = \tau_{n}$ for each $1 \leq i \leq n$ (they are not deterministic integers). From such observations and under similar assumptions, it is proved in \cite{Pana15} that the following upper bound holds (in probability)
\begin{equation}
d_W(\seb, \nu_{0}) \leq \bigO_{\P}\left( \frac{1}{\sqrt{n}} \right) +  \bigO_{\P}\left(  \frac{1}{n} \sum_{i=1}^{n} h_{i}  \right) +  \bigO_{\P}\left(  \frac{1}{\sqrt[4]{\tau_{n}}}  \right). \label{eq:boundPana15}
\end{equation}
Hence, under the conditions that $\tau_{n} \geq \bigO(n^{2})$ and $\max_{1 \leq i \leq n} h_{i} \leq \bigO_{\P}\left( n^{-1/2} \right)$, it follows from Theorem 2 in \cite{Pana15} that $\seb$ converges at the parametric rate $\bigO\left(  n^{-1/2} \right)$, for the Wasserstein distance. The quantity $\tau_{n}$ represents the averaged number of points observed for each Poisson process. As remarked in  \cite{Pana15} the condition $\tau_{n} \geq \bigO(n^{2})$ corresponds to a dense sampling regime where the number $n$ of observed Poisson processes should not grow too fast with respect to the expected  number of points observed for each process. Comparing the upper bounds \eqref{eq:boundsmooth} and \eqref{eq:boundPana15}, the main difference in the control of the risk of $\seb$  between our approach and the one in \cite{Pana15} is that we use the condition $\E \left[ J_{2}(\bnu) \right]  < + \infty$. Under such an assumption, the smoothed Wasserstein barycenter (for the model considered in this paper) may be shown to converge at the rate $\bigO\left(  n^{-1/2} \right)$, for the expected Wasserstein distance, under the dense case  setting $p: = \min\{ p_{i},\ 1 \leq i \leq n\} \geq n$ which is somehow a weaker condition than $\E(\bp_{i}) \geq n^{2}$ for all $1 \leq i \leq n$ as in \cite{Pana15}.
\end{rem}



\subsection{A lower bound on the minimax risk}

In the rest of this section, we show that, in the dense case and for the expected squared Wasserstein distance, the rate of convergence $\bigO\left(  n^{-1} \right)$ for non-smoothed empirical Wasserstein barycenters is optimal from the minimax point of view over a large class of random measures $\bnu$ satisfying the deformable model defined in Section \ref{sec:deform} through Assumptions  \ref{A1}, \ref{A2} and \ref{A3}.

\begin{defin} \label{defin:classD}
For  $\nu_{0} \in \WSac$ and  $\sigma > 0$, we define $\DD(\Omega,\nu_{0}, \sigma^2)$ as the class of $\WS$-valued random measures $\bnu$ that satisfy the deformable model defined in Section \ref{sec:deform} with
$
\var(\bnu) <\sigma^2.
$
\end{defin}

\begin{defin} \label{defin:classF}
Let $A > 0$. We denote by $\FF(\R,A) \subseteq W_2^{ac}(\R)$ a given set of measures with variance bounded by $A$, which contains at least all Gaussian distributions with variance bounded by $A$.
\end{defin}

Then, by  inequality \eqref{eq:upperboundrate}, we obtain the following corollary giving a uniform  rate of convergence for the non-smoothed empirical barycenter in the case of samples of equal size.

\begin{coro} \label{eq:corroupperboundpequal}
Let $A > 0$ and $\sigma > 0$. Suppose that $p_1 = p_2 = \ldots = p_n = p$. Then, if there exists a constant $c_{0} > 0$ such that
\begin{equation}
\sup_{\nu_{0} \in \FF(\R,A) }   \E \left[ d_W^{2}(\bmu_{p}, \nu_{0}) \right] \leq \frac{c_{0}}{n}, \label{eq:conddense}
\end{equation}
it follows that
\begin{equation}
\sup_{\nu_{0} \in \FF(\R,A) } \sup_{ \bnu \in \DD(\R,\nu_{0}, \sigma^2) } \E \left[ d_W^{2}(\neb, \nu_{0}) \right] \leq \frac{\sigma^2 + c_{0}}{n}. \label{eq:optrate}
\end{equation}
\end{coro}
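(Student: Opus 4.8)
The plan is to derive the corollary directly from the upper bound \eqref{eq:upperboundrate}, combined with Proposition~\ref{prop:existence}(iii) and Definitions~\ref{defin:classD} and~\ref{defin:classF}; no new machinery is needed. First I would recall that, in the equal-sample-size case $p_1 = \cdots = p_n = p$, inequality \eqref{eq:upperboundrate} gives, for \emph{every} random measure $\bnu$ obeying Assumptions~\ref{A1}--\ref{A3},
\begin{equation*}
\E \left[ d_W^{2}(\neb, \nu_{0}) \right] \leq \frac{1}{n} \int_{0}^{1} \var\left( \bF^{-}(\alpha) \right) d\alpha + \E \left[ d_W^{2}(\bmu_{p}, \nu_{0}) \right].
\end{equation*}

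The key step is to recognise, via Proposition~\ref{prop:existence}(iii), that the integral appearing in the first term equals exactly the variance $\var(\bnu) = \int_{0}^{1} \var( \bF^{-}(\alpha)) d\alpha$. Hence, as soon as $\bnu \in \DD(\R,\nu_{0},\sigma^2)$, the defining constraint $\var(\bnu) < \sigma^2$ from Definition~\ref{defin:classD} turns the first term into $\sigma^2/n$, yielding $\E[d_W^{2}(\neb,\nu_{0})] \leq \sigma^2/n + \E[d_W^{2}(\bmu_{p},\nu_{0})]$. The point I would stress is that the surviving right-hand side no longer depends on $\bnu$: the benchmark empirical measure $\bmu_{p}$ is formed from an auxiliary sample $Y_1,\ldots,Y_p$ drawn from $\nu_{0}$ only. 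This decoupling is what allows the inner supremum over $\bnu \in \DD(\R,\nu_{0},\sigma^2)$ to be taken without inflating the bound.

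Finally I would take the outer supremum over $\nu_{0} \in \FF(\R,A)$ and apply the standing hypothesis \eqref{eq:conddense}, which bounds $\sup_{\nu_{0} \in \FF(\R,A)} \E[d_W^{2}(\bmu_{p},\nu_{0})]$ by $c_{0}/n$; adding the $\sigma^2/n$ contribution gives the asserted $(\sigma^2 + c_{0})/n$. I do not expect a substantive obstacle here, since the statement is essentially a uniform repackaging of \eqref{eq:upperboundrate}; the only two points deserving care are the identification of the integral term with $\var(\bnu)$ through Proposition~\ref{prop:existence}(iii), and the observation that $\E[d_W^{2}(\bmu_{p},\nu_{0})]$ is independent of $\bnu$, which is precisely what lets the two suprema factor cleanly.
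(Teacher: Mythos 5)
Your proposal is correct and follows exactly the route the paper intends: it derives the corollary from inequality \eqref{eq:upperboundrate}, identifies the integral term with $\var(\bnu)<\sigma^2$ via Proposition \ref{prop:existence}(iii) and Definition \ref{defin:classD}, and uses that $\E[d_W^{2}(\bmu_{p},\nu_{0})]$ depends only on $\nu_{0}$ so the suprema factor, with \eqref{eq:conddense} supplying the $c_{0}/n$ term. No discrepancy with the paper's (essentially one-line) argument.
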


The condition \eqref{eq:conddense} may be interpreted as the generalization of the dense case setting that has been discussed in the previous sections as it is valid only if $p$ is sufficiently large with respect to $n$. As an example, let $A \geq 0$ and suppose that the set $\FF(\R,A)$ can be partitioned as
$$
\FF(\R,A) = \FF_{0}(\R,A) \cup \GG(\R,A),
$$
where $\FF_{0}(\R,A)$ denotes a set of measures $\nu_{0} \in W_2^{ac}(\R)$ with variance bounded by $A$  satisfying
$$
A_{0} := \sup_{\nu_{0} \in \FF_{0}(\R,A)}   J_{2}(\nu_{0})  < + \infty,
$$
while $\GG(\R,A)$ denotes the set of Gaussian distributions with variance bounded by $A$. For this example, it follows from inequalities \eqref{eq:upBL} and \eqref{eq:rateGauss} in Section \ref{sec:convratepequal} (with samples of equal size) that 
$$
\sup_{\nu_{0} \in \FF(\R,A) }  \E \left[ d_W^{2}(\bmu_{p}, \nu_{0}) \right] \leq  \max \left(  A_{0} \frac{2}{p+1} , c_2 A  \frac{\log (\log(p)) }{p} \right) \leq \max(2 A_{0} ,c_2 A)   \frac{\log (\log(p)) }{p},
$$
provided that $\log(\log(p)) \geq 1$, where $c_{2}$ is a constant from inequality \eqref{eq:rateGauss}. Hence, if $p$ is such that $p \geq n  \log (\log(p))$ then condition \eqref{eq:conddense} is satisfied with
$$
c_{0} = \max(2 A_{0} ,c_2 A) = \max \left( 2 \sup_{\nu_{0} \in \FF_{0}(\R,A)}   J_{2}(\nu_{0}), c_2 A \right) .
$$

The following theorem shows that the upper bound \eqref{eq:optrate}  in Corollary \ref{eq:corroupperboundpequal}   is optimal (in term of rate of convergence) from the minimax point of view in nonparametric statistics. 

\begin{theo} \label{theo:lowerbound}
Let $A > 0$ and $\sigma > 0$. Then, the following lower bound holds
\begin{equation}
\inf_{\hat{\bnu}} \sup_{\nu_{0} \in \FF(\R,A) } \sup_{ \bnu \in \DD(\R,\nu_{0}, \sigma^2) } \E \left[ d_W(\hat{\bnu}, \nu_{0}) \right] \geq \frac{e^{-2} \min(A^{1/2},\sigma) }{4}n^{-1/2},
\end{equation}
where $\hat{\bnu} = \hat{\nu}\left( (X_{i,j})_{1 \leq i \leq n; \; 1 \leq j \leq p_{i} } \right) $ denotes any estimator taking values in $(W_2(\R),\BB \left( W_2(\R) \right))$ with $\hat{\nu}$ denoting a measurable function of the data  $(X_{i,j})_{1 \leq i \leq n; \; 1 \leq j \leq p_{i} }$ sampled from  the deformable model defined in Section \ref{sec:deform}. 
\end{theo}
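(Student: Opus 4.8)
The theorem is a minimax lower bound, so the strategy is the standard two-point (or finitely-many-point) reduction from estimation to hypothesis testing, adapted to the Wasserstein metric. I would construct a small finite family of random-measure models $\{\bnu^{(0)},\bnu^{(1)}\}$ (or a family indexed by binary vectors) all lying inside the class $\DD(\R,\nu_{0},\sigma^2)$ for appropriate $\nu_{0}\in\FF(\R,A)$, arranged so that (a) their population barycenters $\nu_{0}^{(0)},\nu_{0}^{(1)}$ are separated in $d_W$ by a distance of order $n^{-1/2}$, yet (b) the induced distributions of the full data $(X_{i,j})$ are statistically close, in the sense that the Kullback--Leibler divergence (or total variation / Hellinger affinity) between them is bounded by a constant. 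The appearance of Gaussian distributions in the very definition of $\FF(\R,A)$ (Definition~\ref{defin:classF}) is the tell-tale sign: the intended construction is almost certainly Gaussian, because for Gaussians both the Wasserstein distance and the KL divergence between the joint laws of the samples are available in closed form.

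\textbf{Construction.}
Concretely, I would take $\bnu$ to be a Gaussian random measure whose randomness lives in its mean: let $\bnu^{(\theta)}=\NN(\bm\theta+\boldsymbol\xi,\,s^2)$ where the deformation $\boldsymbol\xi$ is a centered random variable encoding the variance budget (so that $\var(\bnu)=\int_0^1\var(\bF^-(\alpha))\,d\alpha$ is controlled by $\sigma^2$, using Proposition~\ref{prop:existence}(iii)), and $\bm\theta\in\{0,\delta\}$ is the unknown parameter one wishes to detect. By Proposition~\ref{prop:existence}(ii), the barycenter of $\bnu^{(\theta)}$ is the Gaussian with mean $\theta$ (and fixed variance), so that $d_W(\nu_0^{(0)},\nu_0^{(\delta)})=|\delta|$. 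I would then set the separation $\delta\asymp n^{-1/2}$. The key variance bookkeeping is to choose $s^2$ and the law of $\boldsymbol\xi$ so that both the variance-bound constraint $\var(\bnu)<\sigma^2$ and the membership $\nu_0^{(\theta)}\in\FF(\R,A)$ (a Gaussian of variance at most $A$) hold; this is where the factor $\min(A^{1/2},\sigma)$ in the bound originates, since the construction must respect whichever of the two variance budgets is tighter.

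\textbf{The testing argument.}
Given two models separated by $d_W\asymp\delta$ in barycenter, the reduction is: for any estimator $\hat{\bnu}$, a triangle-inequality argument shows that $\sup_\theta\E[d_W(\hat{\bnu},\nu_0^{(\theta)})]\ge \tfrac{\delta}{2}\inf_{\text{tests}}(\text{max error probability})$, and Le~Cam's inequality bounds the latter below by $\tfrac12(1-\|\P_0-\P_\delta\|_{TV})\ge\tfrac14 e^{-\mathrm{KL}(\P_0\|\P_\delta)}$ (this is the source of the $e^{-2}$ and the $\tfrac14$). Here $\P_\theta$ is the law of the entire dataset of $N:=\sum_i p_i$ real observations under model $\theta$; since all samples are conditionally Gaussian, $\mathrm{KL}(\P_0\|\P_\delta)$ factorizes across units and reduces to a sum of one-dimensional Gaussian KL terms, scaling like $N\delta^2/(\text{variance})$. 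Choosing $\delta\asymp n^{-1/2}$ and invoking the dense regime (or just computing with the $p_i$) keeps this KL bounded by a constant of order $2$, pinning down $\delta$ and yielding the stated $n^{-1/2}$ rate.

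\textbf{The main obstacle.}
I expect the genuine difficulty to be not the testing machinery but the \emph{verification of membership in $\DD(\R,\nu_0,\sigma^2)$} while simultaneously keeping the KL divergence between the joint data laws small. There is a tension: to make the two barycenters $W_2$-separated by $\delta$ one perturbs the mean, but to keep the data laws indistinguishable one needs the per-observation signal-to-noise ratio to be tiny; marrying these through a random deformation $\boldsymbol\xi$ (needed so that $\bnu$ is genuinely \emph{random} with $\var(\bnu)<\sigma^2$, as the deformable model of Section~\ref{sec:deform} demands, rather than a deterministic Gaussian) requires care to ensure the whole construction satisfies Assumptions~\ref{A1}--\ref{A3}. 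Handling the dependence on the possibly-unequal $p_i$ inside the KL sum, and extracting the clean $\min(A^{1/2},\sigma)$ constant, will be the fiddly part; everything downstream is the routine Le~Cam computation.
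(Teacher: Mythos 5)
Your overall strategy coincides with the paper's: a two\--point Le~Cam reduction built from Gaussian location families with a Gaussian random mean (your $\boldsymbol{\xi}$ plays the role of the paper's $\ba^{(k)}$, with $\E[\ba^{(k)}]=m^{(k)}$ and $\var(\ba^{(k)})=\gamma^2$, $\gamma=\min(A^{1/2},\sigma)$), barycenters separated by $2Cn^{-1/2}$ via Proposition \ref{prop:existence}(ii), and the Kullback version of the two\--hypothesis bound of \cite{MR2724359} producing the $\tfrac14e^{-2}$ constant after taking $C=\gamma$. So in terms of route there is nothing different to report.

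There is, however, one substantive error in your KL bookkeeping. You assert that the divergence between the joint data laws ``reduces to a sum of one\--dimensional Gaussian KL terms, scaling like $N\delta^2/(\text{variance})$'' with $N=\sum_i p_i$, and that choosing $\delta\asymp n^{-1/2}$ then keeps it bounded. As written this is false: $N\delta^2\asymp N/n$ equals the average of the $p_i$, which is unbounded in exactly the dense regime you invoke. The point is that within a unit the $p_i$ observations are \emph{not} independent once the random effect is integrated out: the unit\--level marginal is a $p_i$\--dimensional Gaussian with covariance $\gamma^2\left(\be_i\be_i^t+\bI_i\right)$, so the $i$\--th unit contributes $\frac{\delta^2}{2\gamma^2}\,\be_i^t\left(\be_i\be_i^t+\bI_i\right)^{-1}\be_i=\frac{\delta^2}{2\gamma^2}\cdot\frac{p_i}{p_i+1}\leq\frac{\delta^2}{2\gamma^2}$ to the KL, not $p_i$ times a one\--dimensional term. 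The shared random mean acts as a noise floor, so each unit carries at most one effective observation about the population mean; the total KL is at most $n\delta^2/(2\gamma^2)$, which equals $2$ for $\delta=2Cn^{-1/2}$ and $C=\gamma$, uniformly over all choices of the $p_i$. No dense\--regime assumption is needed (and none appears in the statement of the theorem). With this correction your argument is the paper's proof.
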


Now, by using inequalities \eqref{eq:nonsmoothparamrate} and \eqref{eq:paramrate}  and Definitions \ref{defin:classD} and \ref{defin:classF} introduced above, we  also obtain the following corollary giving uniform rates of convergence for the non-smooth Wasserstein barycenter in the general situation where the $p_i'$s are possibly different.

\begin{coro} \label{eq:corroupperbound}
 Let $A > 0$ and $\sigma > 0$. Suppose that the assumptions of Corollary \ref{coro:ratehatbnupdiff} are satisfied, and   that $ p_{i} \geq n$,  for all $1 \leq i \leq n$. Then, the following upper bound holds 
$$
\sup_{\nu_{0} \in \FF(\R,A) } \sup_{ \bnu \in \DD(\R,\nu_{0}, \sigma^2) } \E \left[ d_W(\nebpbar, \nu_{0}) \right] \leq  n^{-1/2}  \left( \sigma  + \sqrt{2}  \sup_{\nu_{0} \in \FF(\R,A) } \sup_{ \bnu \in \DD(\R,\nu_{0}, \sigma^2) }  \sqrt{ \E \left[ J_{2}(\bnu) \right] }  \right) .
$$
\end{coro}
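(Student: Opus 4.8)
The plan is to combine the two upper bounds from Corollaries~\ref{coro:ratehatbnupdiff} and the definitions of the classes $\DD$ and $\FF$ directly, since the corollary is essentially a matter of taking suprema over the two nested families of measures. First I would invoke Corollary~\ref{coro:ratehatbnupdiff}, which already gives, for each fixed $\nu_0$ and each $\bnu$ satisfying Assumptions~\ref{A1}--\ref{A3} with $\E[J_2(\bnu)] < +\infty$, the bound
\begin{equation*}
\E \left[ d_W(\nebpbar, \nu_{0}) \right] \leq n^{-1/2} \sqrt{  \int_{0}^{1} \var\left(  \bF^{-}  (\alpha) \right)d\alpha } + \sqrt{2  \E \left[ J_{2}(\bnu) \right] } \left( \frac{1}{n} \sum_{i=1}^{n} p_{i}^{-1/2} \right).
\end{equation*}
The two tasks are then to control each of the two summands uniformly over $\nu_0 \in \FF(\R,A)$ and $\bnu \in \DD(\R,\nu_0,\sigma^2)$.

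For the first term, I would use that by Proposition~\ref{prop:existence}(iii) we have $\int_0^1 \var(\bF^-(\alpha))\,d\alpha = \var(\bnu)$, and that the defining condition of the class $\DD(\R,\nu_0,\sigma^2)$ in Definition~\ref{defin:classD} is precisely $\var(\bnu) < \sigma^2$. Hence $\sqrt{\int_0^1 \var(\bF^-(\alpha))\,d\alpha} \leq \sigma$ uniformly, and the first summand is bounded by $\sigma n^{-1/2}$ for every admissible pair $(\nu_0,\bnu)$. For the second term, I would use the hypothesis $p_i \geq n$ for all $i$, which gives $p_i^{-1/2} \leq n^{-1/2}$ and therefore $\frac{1}{n}\sum_{i=1}^n p_i^{-1/2} \leq n^{-1/2}$, so that the factor $\sqrt{2\,\E[J_2(\bnu)]}\,(\frac{1}{n}\sum_i p_i^{-1/2})$ is at most $\sqrt{2}\,\sqrt{\E[J_2(\bnu)]}\,n^{-1/2}$. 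Replacing $\sqrt{\E[J_2(\bnu)]}$ by its supremum over the two nested classes yields the stated second summand.

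Adding the two bounds and factoring out $n^{-1/2}$ gives exactly the claimed inequality. The only mildly delicate point is ensuring that Corollary~\ref{coro:ratehatbnupdiff} applies uniformly: its hypothesis requires $\E[J_2(\bnu)] < +\infty$ for each $\bnu$, but the right-hand side of the target inequality carries the supremum $\sup_{\nu_0}\sup_{\bnu}\sqrt{\E[J_2(\bnu)]}$, so the bound is vacuous (the right-hand side is infinite) unless this supremum is finite; in that regime the pointwise bound holds for every $\bnu$ in the class and passing to the supremum is immediate. I do not expect any genuine obstacle here — the result is a clean uniform repackaging of Corollary~\ref{coro:ratehatbnupdiff} using $\var(\bnu) < \sigma^2$ from Definition~\ref{defin:classD} and the dense-regime condition $p_i \geq n$ — and the main care needed is bookkeeping of the suprema so that the final constant matches the display.
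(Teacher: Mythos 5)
Your proposal is correct and follows exactly the route the paper intends: it deduces the corollary from Corollary \ref{coro:ratehatbnupdiff} by bounding $\int_0^1 \var(\bF^-(\alpha))\,d\alpha = \var(\bnu) < \sigma^2$ via Proposition \ref{prop:existence}(iii) and Definition \ref{defin:classD}, using $p_i \geq n$ to get $\frac{1}{n}\sum_i p_i^{-1/2} \leq n^{-1/2}$, and then passing to suprema. Your remark that the bound is vacuous when the supremum of $\sqrt{\E[J_2(\bnu)]}$ is infinite is a reasonable and accurate clarification of the paper's (unstated) proof.
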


Hence, under the assumptions made in Corollary \ref{eq:corroupperbound}, the estimator $\nebpbar$ converges at the optimal rate of convergence $n^{-1/2}$ provided that 
$$
\sup_{\nu_{0} \in \FF(\R,A) } \sup_{ \bnu \in \DD(\R,\nu_{0}, \sigma^2) }  \E \left[ J_{2}(\bnu) \right] < + \infty.
$$ 
We conclude this discussion by a few remarks on the rate of convergence that may be obtained in the sparse case.

\begin{rem}
In the case of samples of equal size, the results above show that the rate of convergence $n^{-1}$ is optimal in the dense case (for the risk $\E \left[ d_W^2(\neb, \nu_{0}) \right]$), namely when the number $p = p_1 = \ldots = p_n$ of observations per units is sufficiently large with respect to $n$. We believe that deriving a lower bound on the minimax risk depending on $p$ in the sparse case (e.g.\ when $p < n$) is more involved. Indeed, from the discussion in Section \ref{sec:convratepequal} on the rate of convergence non-smoothed empirical barycenter, it appears that the exact decay of $\E \left[ d^{2}_W(\neb, \nu_{0}) \right]$ as a function of $p$ is difficult to establish as it depends on $\nu_{0}$. Indeed, from Section \ref{sec:convratepequal}, one has that
\begin{itemize}
\item[-] if $\nu_{0}$ is the uniform distribution on $[0,1]$, then $\E \left[ d_W^{2}(\neb, \nu_{0}) \right]  \asymp \frac{1}{n} + \frac{1}{np}+ \frac{1}{p^2} $,
\item[-] if $\nu_{0}$ is the one-sided exponential distribution, then $\E \left[ d_W^{2}(\neb, \nu_{0}) \right] =  \bigO\left( \frac{1}{n} +  \log(p) \left( \frac{1}{np} + \frac{1}{p} \right) \right)$, 
\item[-] if $\nu_{0}$ is the standard Gaussian distribution, then $\E \left[ d_W^{2}(\neb, \nu_{0}) \right] =  \bigO\left( \frac{1}{n} +  \log(\log(p)) \left( \frac{1}{np} + \frac{1}{p} \right) \right)$, 
\item[-] if $\nu_{0}$ is such that $J_{2}(\nu_{0}) < + \infty$, then $\E \left[ d_W^{2}(\neb, \nu_{0}) \right] =  \bigO\left( \frac{1}{n} +  \frac{1}{p} \right)$. 
\end{itemize}
From Theorem \ref{theo:ratehatbnu}, one has that the risk of the non-smoothed empirical barycenter may be bounded from below as follows
\begin{equation}
\sup_{\nu_{0} \in \FF(\R,A) } \sup_{ \bnu \in \DD(\R,\nu_{0}, \sigma^2)}  \E \left[ d_W^{2}(\neb, \nu_{0}) \right] \geq  \sup_{\nu_{0} \in \FF(\R,A) } \sum_{j=1}^{p} \int_{(j-1)/p}^{j/p}   \left( \E \left[ Y_{j}^{\ast}  \right]- F_{0}^{-}(\alpha) \right)^2 d\alpha. \label{eq:lowerboundp}
\end{equation}
The quantity $\sum_{j=1}^{p} \int_{(j-1)/p}^{j/p}   \left( \E \left[ Y_{j}^{\ast}  \right]- F_{0}^{-}(\alpha) \right)^2 d\alpha$ may be interpreted as a bias term when estimating the unknown measure by the nonparametric estimator $\bmu_{p} = \frac{1}{p}\sum_{j=1}^{p} \delta_{Y_{j}}$. Therefore, for samples of equal size and in the sparse case (when $p < n$), the lower bound \eqref{eq:lowerboundp} may be used to control (as a function of $p$) the best rate of convergence for $\neb$ that may be obtained over the class of measures $\nu_{0} \in \FF(\R,A)$.
\end{rem}

\begin{rem}
Finally, we remark that better rates of convergence may be obtained if one assumes a parametric model for the random measure $\bnu$. Indeed, suppose that $\mu_{0} \in \WSac$ denotes a {\it known probability measure} with expectation $m_{0}$ and variance $\sigma_{0}^2$ 
and consider that the data $(X_{i,j})_{1 \leq i \leq n; \; 1 \leq j \leq p_{i} }$ are sampled from iid random measures $\bnu_{1},\ldots,\bnu_{n}$ satisfying the location model
\begin{equation}
F_{\bnu_{i}}^{-}(\alpha) = F_{\mu_{0}}^{-}(\alpha) + \ba_{i}, \; \alpha \in [0,1], \; 1 \leq i \leq n, \label{eq:locmodel}
\end{equation}
where $\ba_{1},\dots,\ba_{n}$ are iid random variables with unknown expectation $\bar{a}$ and variance $\gamma^{2}$. In this model, the population Wasserstein barycenter is the measure $\nu_{0}$ with quantile function $F_{\nu_{0}}^{-}(\cdot)  = F_{\mu_{0}}^{-}(\cdot) + \bar{a}$. Since, the measure $\mu_{0}$ is assumed to be known, a natural estimator for $\nu_{0}$ is to take the measure $\hat{\bnu}_{0}$ with quantile function   $F_{\hat{\bnu}_{0}}^{-}(\cdot)  = F_{\mu_{0}}^{-}(\cdot) + \hat{\ba}$, with
$$
\hat{\ba} = \frac{1}{n} \sum_{i=1}^{n} \frac{1}{p_{i}} \sum_{j=1}^{p_{i}} X_{ij} - m_{0}.
$$
Then, it is clear that
\begin{eqnarray*}
\E \left[ d_W^{2}(\hat{\bnu}_{0}, \nu_{0}) \right] & = & \int_{0}^{1} \E  \left(F_{\hat{\bnu}_{0}}^{-}(\alpha) - F_{\nu_{0}}^{-}(\alpha)  \right)^2 d\alpha = \E  \left( \hat{\ba} -  \bar{a} \right)^2  \\
& = & \frac{\sigma_{0}^2 + \gamma^{2}}{n} \left(   \frac{1}{n}   \sum_{i=1}^{n} \frac{1}{p_{i}}  \right) +  \frac{\gamma^{2}}{n} \left(   \frac{1}{n}   \sum_{i=1}^{n} \frac{p_{i}-1}{p_{i}}  \right).
\end{eqnarray*}
In the case where all the $p_i$'s are equal to $p$, then the above equality simplifies to
$$
\E \left[ d_W^{2}(\hat{\bnu}_{0}, \nu_{0}) \right]  = \frac{\sigma_{0}^2 + \gamma^{2}}{n p}+  \frac{\gamma^{2}}{n}  \frac{p-1}{p},
$$
and thus the parametric estimator $\hat{\bnu}_{0}$ converges at the rate $\bigO\left(\frac{1}{n} + \frac{1}{np} \right)$. Therefore, either in the dense ($p \geq n$) or sparse case  ($p < n$), the parametric estimator $\hat{\bnu}_{0}$ converges at the rate $\bigO\left(\frac{1}{n} \right)$ in the location model \eqref{eq:locmodel} when the ``reference measure'' $\mu_{0}$ is known. Moreover, in the sparse case ($p < n$), the parametric estimator $\hat{\bnu}_{0}$ converges faster than the non-smoothed empirical Wasserstein barycenter $\neb$ thanks to the results in Section \ref{sec:convratepequal}. 
\end{rem}

\section{Numerical experiments} \label{sec:num}

In this simulation study, we perform Monte Carlo experiments to compare the decay of the squared Wassertein risks $\E \left[ d_W^{2}(\seb, \nu_{0}) \right]$ and $\E \left[ d_W^{2}(\neb, \nu_{0}) \right]$ of the smoothed and non-smoothed empirical Wasserstein barycenters $\seb$ and $\neb$ as a function of the number $n$ of units and the sample size $p$. 

We analyze the case of random samples $(X_{i,j})_{1 \leq i \leq n; \; 1 \leq j \leq p }$ with $10 \leq n \leq 200$ and  $10 \leq p \leq 200$. Data are generated from densities supported on $\Omega = [-7,7]$ that are sampled from the following model accounting for vertical and horizontal variations
\begin{equation} \label{eq:locscalesim}
\bfun_{i}(x) =C_{i} \ba_i^{-1}f \left( \ba_i^{-1}(x-\bb_{i})\right), \; x \in \Omega, \; 1 \leq i \leq n.
\end{equation}
where $f$ is the density of the standard Gaussian law  on $\R$, $\ba_{i} \sim \U([0.8,1.2])$, $\bb_{i} \sim \U([-2,2])$, and $C_{i}$ is a normalizing constant such that $\bfun_{i}$ integrates to one on $\Omega$. This setting  corresponds to the the simulation study conducted in \cite{PetersenMuller}.

For given values of $n$ and $p$, we evaluate the Wasserstein risk of $\seb$ by repeating $M=100$ times the following experiment. First, data are simulated from model \eqref{eq:locscalesim}. Then, for each $1 \leq i \leq n$, we  use kernel smoothing to compute the density $\hat{\bfun}^{h_i}_{i}$ and its associated measure  $\hat{\bnu}_{i}^{h_{i}}$. We slightly deviate from the analysis carried out in Section \ref{sec:convrate}, as we use a Gaussian kernel to smooth the data $(X_{i,j})_{1 \leq j \leq p }$ with bandwidth $h_i$ chosen by cross validation, instead of the specific kernel defined in \eqref{eq:kernchoice} that has been proposed for the convergence analysis of $\seb$. We found that this modification has no substantial effect on the finite sample performance of the procedure, and a similar choice has been made in the numerical experiments in \cite{Pana15}. In Figure \ref{fig:simux07:data}(a), we display  an example of  densities estimated from realizations of the model \eqref{eq:locscalesim} with $n = p = 100$. After computing the quantile function $F_{\seb}^{-}$ of the empirical smoothed Wasserstein barycenter $\seb$, we approximate $d_W^{2}(\seb, \nu_{0}) =  \int_{0}^1 (F_{\seb}^{-}(\alpha) - F_{\nu_{0}}^{-}(\alpha))^2 d\alpha$ by discretizing the integral over a fine grid of values for $\alpha \in ]0,1[$. This approximated value of  $d_W^{2}(\seb, \nu_{0})$ is then averaged over the $M=100$ repeated experiments to approximate $\E \left[ d_W^{2}(\seb, \nu_{0}) \right]$. 

Thanks to the explicit expression \eqref{eq:nonregestim} of the non-smoothed empirical Wasserstein barycenter $\neb$,
its  quantile function $F_{\neb}^{-}$  is straightforward to compute on a grid of values for $\alpha$, and the Wasserstein risk  $\E \left( d_W^{2}(\neb, \nu_{0}) \right)$ is then approximated in the same way by using Monte Carlo repetitions.

For values of $n$ and $p$ ranging from 10 to 200, we display  in Figure \ref{fig:simux07:data} (c) and   \ref{fig:simux07:data} (d) these approximations of $\E \left[ d_W^{2}(\seb, \nu_{0}) \right]$ and $\E \left[ d_W^{2}(\neb, \nu_{0}) \right]$ (in logarithmic scale). For both estimators, it appears that the Wasserstein risk is clearly a decreasing function of the number $n$ of units. To the contrary increasing $p$ does not lead to a significant decay of this risk. This suggest that $ \frac{1}{n}  \int_{0}^{1}  \var\left(  \bF^{-}  (\alpha) \right)d\alpha $ is the most significant term in the upper bound \eqref{eq:ratehatbnu} of the Wasserstein risk of $\neb$.

In Figure \ref{fig:simux07:data} (b), we also display the logarithm of the ratio $\E \left[ d_W^{2}(\neb, \nu_{0})\right]  / \E\left[ d_W^{2}(\seb, \nu_{0})  \right]$. For values of $p$ larger than 100, both estimators (smoothed and non-smoothed empirical Wasserstein barycenters) appear to have squared Wasserstein risks of approximately the same magnitude. This tends to confirm the results on convergence rates obtained in Section \ref{sec:convrate} in the dense case (when $p$ is sufficiently large with respect to $n$) which show that a preliminary smoothing is not necessary in this setting. However, for smaller values of $p$ (between 10 and 50), the smoothed empirical Wasserstein barycenter has a smaller Wasserstein risk. This suggests that introducing a smoothing step through kernel smoothing of the data in each experimental unit improves the quality of the estimation of $\nu_{0}$ when the sample size $p$ is small, which corresponds to the sparse case.

\begin{figure}[htbp]
\centering
\subfigure[An example of estimated densities.]{\includegraphics[width=7cm]{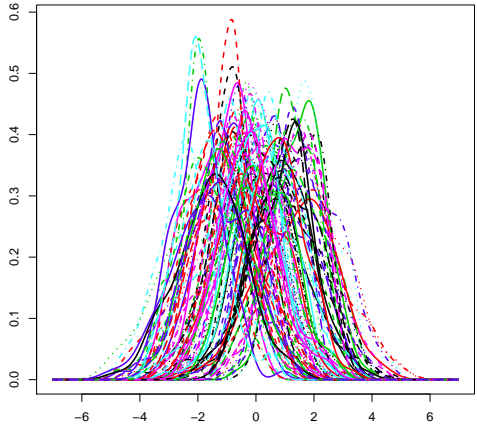}}
\subfigure[ $\log\left(\E \left( d_W^{2}(\neb, \nu_{0}) \right) / \E\left(d_W^{2}(\seb, \nu_{0})  \right)\right)$.]{\includegraphics[width=7cm]{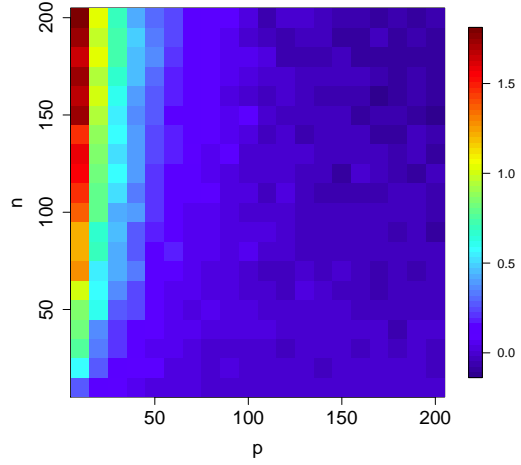}}

\subfigure[Log-Wasserstein risk of $\seb$.]{\includegraphics[width=7cm]{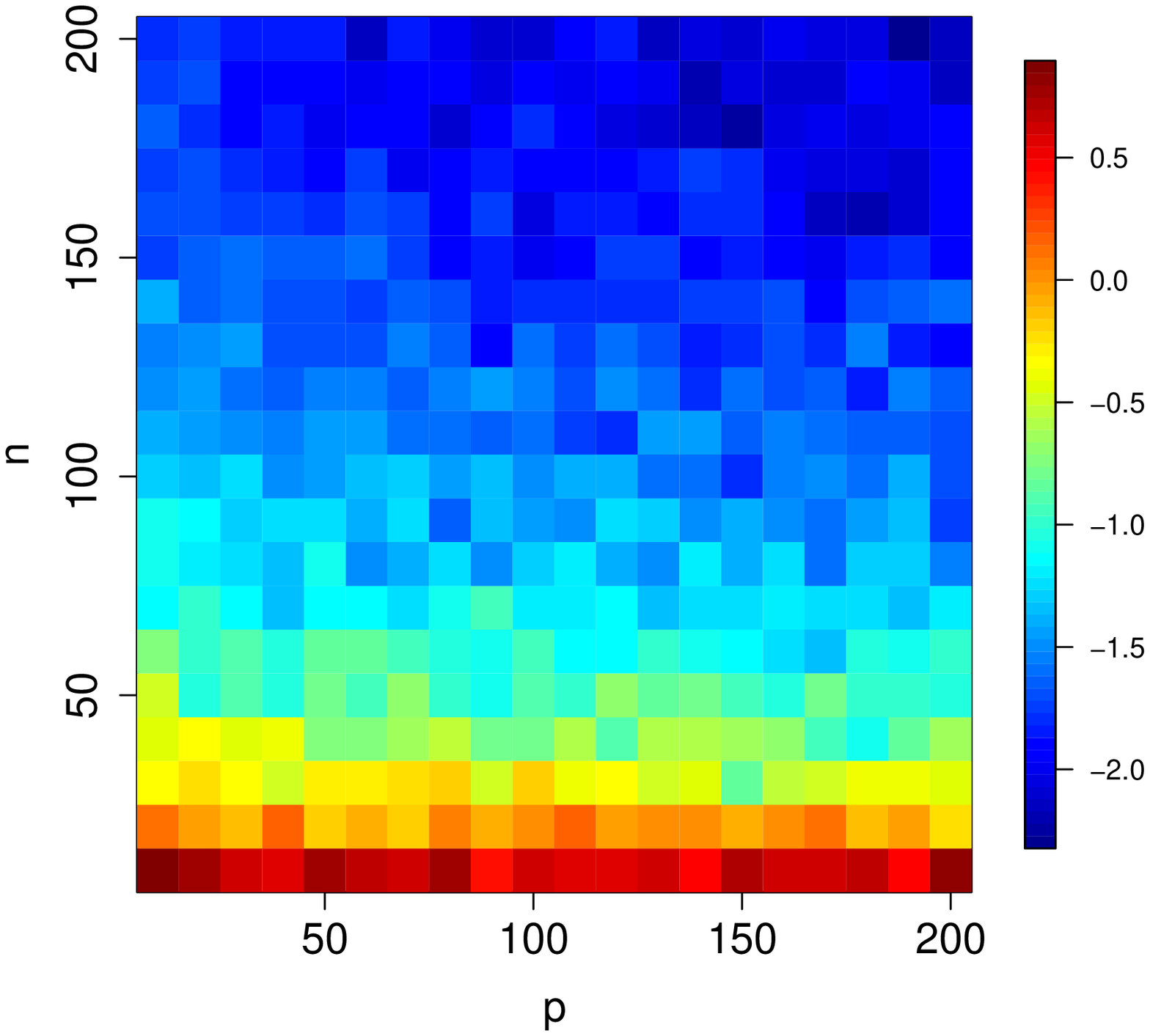}}
\subfigure[Log-Wasserstein risk of $\neb$.]{\includegraphics[width=7cm]{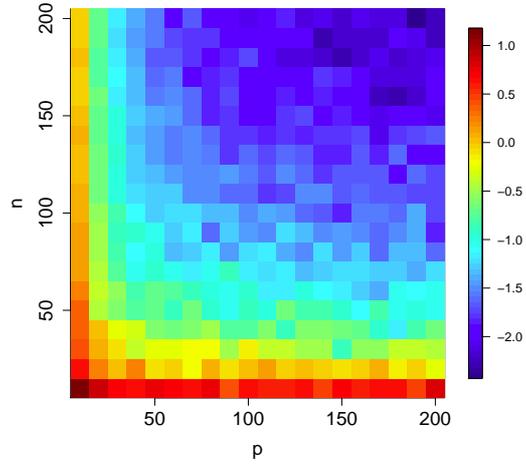}}

\caption{(a) An example of $n=100$  densities estimated from data sampled from model \eqref{eq:locscalesim} with $n = p = 100$, (b) Logarithm of the ratio  $\E ( d_W^{2}(\neb, \nu_{0}) ) / \E \left[ d_W^{2}(\seb, \nu_{0})  \right]$, (c) Wasserstein risk of the smoothed empirical barycenter $\seb$ with kernel bandwidths chosen by cross-validation,  (c) Wasserstein risk of the non-smoothed empirical barycenter $\neb$. The values of $n$ and $p$ vary from 10 to 200 by an increment of 10.} \label{fig:simux07:data}
\end{figure}

\section{Conclusion and perspectives} \label{sec:conclusion}

In this paper, we have studied the rate of convergence for the (squared) Wasserstein distance of (possibly smoothed) empirical barycenters in a deformable model of measures. The main contributions of this work can be summarized as follows. In the case of samples of equal size, we have derived a closed-form formula  for the risk of non-smooth empirical barycenter as a function of $n$ and $p$, which allows to derive sharp rates convergence whose rate of decay in $p$ depends on the population mean measure $\nu_{0}$. A second conclusion of the paper is that, in the dense case (when the minimal number $\min_{1 \leq i \leq n} p_{i} \geq n$ of  observations per unit is sufficiently large with respect to the number $n$ of observed measures), the non-smooth empirical barycenter converges at the parametric rate of convergence $n^{-1}$. Moreover, this rate is shown to be a lower bound on the decay of a novel notion of minimax risk in the deformable model of measures introduced in this paper.  In the dense case, the numerical experiments that have been carried out are in agreement with the theoretical results which show, that in this setting, one may only consider the non-smoothed empirical Wasserstein barycenter, and that a preliminary smoothing step is not necessary to obtain an optimal estimator.

A first perspective would be to find a lower bound on the minimax risk depending on $p$ in the sparse case. However, to this end, we believe that one has to first obtain sharper rates of convergence as a function of $p$ for the non-smooth empirical barycenter.

Finally, a natural perspective is to ask how these results can be extended to higher dimensional settings for measures supported on $\R^{d}$ with $d > 1$. However, we believe that this is far from being obvious as the results in this paper rely heavily on the closed form formula for Wasserstein barycenters in the one-dimensional setting though quantile averaging. Such results do not hold in higher-dimension for data sets consisting of iid random vectors sampled from unknown random measures supported on $\R^{2}$ or $\R^{3}$ for example.


\appendix
\section{Appendix}

\subsection{Auxiliary results}

We recall that $Y_1,\ldots,Y_p$ denote iid variables sampled from the measure $\nu_{0}$ (independently of the data), and that the associated empirical measure is $\bmu_{p} = \frac{1}{p}\sum_{j=1}^{p} \delta_{Y_{j}}$.  By Corollary 4.5 in \cite{W1}, it follows that
\begin{equation}
\E \left[ d_W^{2}(\bmu_{p}, \nu_{0}) \right]  = \frac{1}{p} \sum_{j=1}^{p} \var \left( Y_{j}^{\ast}  \right) + \sum_{j=1}^{p} \int_{(j-1)/p}^{j/p}   \left(\E\left[ Y_{j}^{\ast} \right] - F_{0}^{-}(\alpha) \right)^2 d\alpha. \label{eq:W2order}
\end{equation}
where $Y_1^{\ast} \leq Y_2^{\ast}  \leq \ldots \leq Y_p^{\ast}$ denote the order statistics of the sample $Y_1,\ldots,Y_p$. 

It is well known that the  $j$-th order statistic $Y_{j}^{\ast}$ admits the density (see e.g.\ \cite{W1})
\begin{equation}
f_{Y_{j}^{\ast}}(y) = \frac{p!}{(j-1)! (p-j)!}  f_{0}(y) [F_{0}(y)]^{j-1} [1-F_{0}(y)]^{p-j}  , \; y \in \Omega, \label{eq:densY}
\end{equation}
Moreover, under Assumption \ref{A2}, one has that, conditionally on $\bF_{i}$, the $j$-th order statistic $X_{i,j}^{\ast}$ admits the density
\begin{equation}  
f_{X_{i,j}^{\ast}}(x) = \frac{p!}{(j-1)! (p-j)!}  \bfun_{i}(x) [\bF_{i}(x)]^{j-1} [1-\bF_{i}(x)]^{p-j}  , \; x \in \Omega. \label{eq:densX}
\end{equation}

Let us recall the notation $\bar{X}_{j}^{\ast} = \frac{1}{n} \sum_{i=1}^{n} X_{i,j}^{\ast}$. Then, the following result holds.

\begin{lem} \label{lem:A}
If  Assumptions   \ref{A1}, \ref{A2} and \ref{A3} are satisfied, then, for each $1 \leq j \leq p$, one has that
$$
\E\left[ \bar{X}_{j}^{\ast} \right] = \E \left[ Y_{j}^{\ast}  \right].
$$
Moreover,
$$
 \frac{1}{p} \sum_{j=1}^{p}  \var \left( \bar{X}_{j}^{\ast}  \right) - \var \left( Y_{j}^{\ast}  \right) = \frac{1}{n}  \left(  \int_{0}^{1}  \var \left(  \bF^{-}(\alpha)   \right)  d\alpha   \right) +  \frac{1-n}{p n} \sum_{j=1}^{p}  \var \left( Y_{j}^{\ast}  \right).
$$
\end{lem}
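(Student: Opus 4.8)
The plan is to reduce everything to the classical quantile-transform representation of order statistics, and then to combine Proposition \ref{prop:existence}(ii) with a single binomial identity. First I would record the representation: conditionally on $\bnu_i$, Assumption \ref{A3} says that $X_{i,1},\dots,X_{i,p}$ are iid from $\bnu_i$, so the $j$-th order statistic satisfies $X_{i,j}^{\ast}\overset{d}{=}\bF_{i}^{-}(U_j^{\ast})$, where $U_1^{\ast}\le\cdots\le U_p^{\ast}$ are the order statistics of $p$ iid uniforms on $[0,1]$ (independent of $\bnu_i$). Writing $g_j$ for the Beta$(j,p+1-j)$ density of $U_j^{\ast}$, namely $g_j(u)=\frac{p!}{(j-1)!(p-j)!}u^{j-1}(1-u)^{p-j}$, this gives $\E[X_{i,j}^{\ast}\mid\bnu_i]=\int_0^1\bF_{i}^{-}(u)g_j(u)\,du$ and $\E[(X_{i,j}^{\ast})^2\mid\bnu_i]=\int_0^1\bF_{i}^{-}(u)^2g_j(u)\,du$, and the same holds for $Y_j^{\ast}\overset{d}{=}F_{0}^{-}(U_j^{\ast})$ with $\bF_{i}^{-}$ replaced by the deterministic $F_{0}^{-}$.

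For the mean identity, since the $\bnu_i$ are iid copies of $\bnu$ the variables $X_{i,j}^{\ast}$ are identically distributed in $i$, so $\E[\bar{X}_{j}^{\ast}]=\E[X_{1,j}^{\ast}]$. Taking expectations in the conditional formula and interchanging $\E$ with the $du$-integral gives $\E[X_{1,j}^{\ast}]=\int_0^1\E[\bF^{-}(u)]g_j(u)\,du$, and by Proposition \ref{prop:existence}(ii) (equation \eqref{eq:A3}) one has $\E[\bF^{-}(u)]=F_{0}^{-}(u)$, whence $\E[X_{1,j}^{\ast}]=\int_0^1F_{0}^{-}(u)g_j(u)\,du=\E[Y_j^{\ast}]$. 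This proves the first claim.

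For the variance part I would first use independence across $i$ (Assumptions \ref{A1} and \ref{A3}): for fixed $j$ the variables $X_{1,j}^{\ast},\dots,X_{n,j}^{\ast}$ are iid, so $\var(\bar{X}_{j}^{\ast})=\tfrac1n\var(X_{1,j}^{\ast})$, and it suffices to evaluate $\var(X_{1,j}^{\ast})-\var(Y_j^{\ast})$. Because $\E[X_{1,j}^{\ast}]=\E[Y_j^{\ast}]$ the squared-mean terms cancel, and using the conditional second moments together with Fubini I obtain
\[
\var(X_{1,j}^{\ast})-\var(Y_j^{\ast})=\int_0^1\Big(\E[\bF^{-}(u)^2]-F_{0}^{-}(u)^2\Big)g_j(u)\,du=\int_0^1\var\!\big(\bF^{-}(u)\big)\,g_j(u)\,du,
\]
where the last equality again uses $\E[\bF^{-}(u)]=F_{0}^{-}(u)$, so that $\E[\bF^{-}(u)^2]-F_{0}^{-}(u)^2=\var(\bF^{-}(u))$. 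Summing over $j$ and invoking the identity $\sum_{j=1}^pg_j(u)=p$ (which follows from $\sum_{k=0}^{p-1}\binom{p-1}{k}u^k(1-u)^{p-1-k}=1$) collapses the sum to $\frac1p\sum_{j=1}^p[\var(X_{1,j}^{\ast})-\var(Y_j^{\ast})]=\int_0^1\var(\bF^{-}(u))\,du$. Finally, substituting $\var(\bar{X}_{j}^{\ast})=\tfrac1n\var(X_{1,j}^{\ast})$ and collecting terms yields the stated formula, since $\tfrac1{np}-\tfrac1p=\tfrac{1-n}{np}$.

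The algebra is elementary once the representation is in place; the only delicate points are the measurability of the conditional laws and the interchanges of expectation with the $du$-integration. These Fubini steps are justified by the square-integrability of Assumption \ref{A1} (equation \eqref{eq:A3} yields $\int_0^1\E[\bF^{-}(u)^2]\,du<\infty$) together with the boundedness of each Beta density $g_j$ on $[0,1]$. I expect this integrability bookkeeping, rather than any computation, to be the main thing to handle carefully.
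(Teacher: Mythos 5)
Your proof is correct and follows essentially the same route as the paper: conditioning on $\bnu_i$, expressing the conditional moments of $X_{i,j}^{\ast}$ as Beta-weighted integrals of $\bF_i^{-}$ and $(\bF_i^{-})^2$ (the paper does this via the explicit order-statistic density and the change of variable $\alpha=\bF_i(x)$, which is the same computation as your quantile-transform representation), invoking $\E[\bF^{-}]=F_0^{-}$ from Proposition \ref{prop:existence}, using independence across $i$ to reduce $\var(\bar{X}_j^{\ast})$ to $\tfrac{1}{n}\var(X_{1,j}^{\ast})$, and collapsing the sum over $j$ with $\sum_j g_j\equiv p$. The final bookkeeping with $\tfrac{1}{np}-\tfrac{1}{p}=\tfrac{1-n}{np}$ matches the paper's conclusion exactly.
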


\begin{proof}
Let $1 \leq j \leq p$ and $1 \leq i \leq n$.  Thanks to the expression \eqref{eq:densX} for the density of $X_{i,j}^{\ast}$, one has that
\begin{eqnarray*}
\E\left[ X_{i,j}^{\ast}  | \bF_{i}  \right] & = &  \int_{\Omega} x \frac{p!}{(j-1)! (p-j)!}  \bfun_{i}(x) [\bF_{i}(x)]^{j-1} [1-\bF_{i}(x)]^{p-j} dx \\
 & = &  \int_{0}^{1} \bF_{i}^{-}(\alpha) \frac{p!}{(j-1)! (p-j)!}   \alpha^{j-1} (1-\alpha)^{p-j} d\alpha.
\end{eqnarray*}
where we used the change of variable $\alpha = \bF_{i}(x)$  to obtain the last equality.  By Proposition  \ref{prop:existence},   $\E\left[   \bF_{i}^{-} \right] = F_{0}^{-}(\alpha)$ for each $1 \leq i \leq n$. Therefore,  using Fubini's theorem, it follows that
\begin{eqnarray}
\E\left[ X_{i,j}^{\ast} \right] & = & \E\left[  \E\left[ X_{i,j}^{\ast} | \bF_{i} \right] \right]  \nonumber \\
& = &    \E\left[   \int_{0}^{1} \bF_{i}^{-}(\alpha) \frac{p!}{(j-1)! (p-j)!}   \alpha^{j-1} (1-\alpha)^{p-j} d\alpha \right] \nonumber \\
& = &  \int_{0}^{1}  \E\left[   \bF_{i}^{-}(\alpha) \right]  \frac{p!}{(j-1)! (p-j)!}   \alpha^{j-1} (1-\alpha)^{p-j} d\alpha \nonumber \\
& = &  \int_{0}^{1}   F_{0}^{-}(\alpha)    \frac{p!}{(j-1)! (p-j)!}   \alpha^{j-1} (1-\alpha)^{p-j} d\alpha \nonumber \\
& = & \int_{\Omega} y \frac{p!}{(j-1)! (p-j)!}  f_{0}(y) [F_{0}(y)]^{j-1} [1-F_{0}(y)]^{p-j}  dy = \E \left[ Y_{j}^{\ast}  \right], \label{eq:EXEY}
\end{eqnarray}
where we used the change of variable $y = F_{0}^{-}(\alpha) $ and the expression \eqref{eq:densY} for the density of $Y_{j}^{\ast}$ to obtain the last equality above. Given that $\E\left[ \bar{X}_{j}^{\ast}  \right] =  \frac{1}{n} \sum_{i=1}^{n} \E\left[   X_{i,j}^{\ast}  \right] $, the  first statement of Lemma \ref{lem:A} follows from equality \eqref{eq:EXEY}.

Now, let us prove the second statement of Lemma \ref{lem:A}. Thanks to the expression \eqref{eq:densX} for the density of $X_{i,j}^{\ast}$, one has that, for each $1 \leq j \leq p$ and  $1 \leq i \leq  n$
\begin{eqnarray}
\E \left[  |X_{i,j}^{\ast}|^2 \right] & =  & \E \left[ \E \left[  |X_{i,j}^{\ast}|^2 | \bF_{i} \right] \right] \nonumber \\
& = & \E \left[  \int_{\Omega} x^2 \frac{p!}{(j-1)! (p-j)!}  \bfun_{i}(x) [\bF_{i}(x)]^{j-1} [1-\bF_{i}(x)]^{p-j} dx \right] \nonumber  \\
 & =  &  \int_{0}^{1} \E\left[ \left| \bF_{i}^{-}(\alpha) \right|^2 \right] \frac{p!}{(j-1)! (p-j)!}   \alpha^{j-1} (1-\alpha)^{p-j} d\alpha, \label{eq:X2}
\end{eqnarray}
where we again use the change of  variable $\alpha = \bF_{i}(x)$, and Fubini's theorem to obtain the last equality. Similarly, by equality \eqref{eq:densY},  it follows that for each $1 \leq j \leq p$
\begin{eqnarray}
\E \left[  |Y_{j}^{\ast} |^2 \right] & = &  \int_{\Omega} y^2 \frac{p!}{(j-1)! (p-j)!}  f_{0}(y) [F_{0}(y)]^{j-1} [1-F_{0}(y)]^{p-j} dy  \nonumber  \\
 & =  &  \int_{0}^{1} \left| F_{0}^{-}(\alpha) \right|^2 \frac{p!}{(j-1)! (p-j)!}   \alpha^{j-1} (1-\alpha)^{p-j} d\alpha.  \label{eq:Y2}
\end{eqnarray}
Since  $\bar{X}_{j}^{\ast} = \frac{1}{n} \sum_{i=1}^{n} X_{i,j}^{\ast}$, we obtain by independence that
$$
 \var \left( \bar{X}_{j}^{\ast}  \right) = \frac{1}{n^2}   \sum_{i=1}^{n}  \var \left(  X_{i,j}^{\ast} \right) = \frac{1}{n^2}   \sum_{i=1}^{n}  \E \left[  |X_{i,j}^{\ast}|^2 \right] - \left| \E\left[ X_{ij}^{\ast} \right] \right|^2 . 
$$
Hence, using equalities \eqref{eq:EXEY}, \eqref{eq:X2} and \eqref{eq:Y2}, and the fact that $\E\left[ \left| \bF_{i}^{-}(\alpha) \right|^2 \right] = \E\left[ \left| \bF^{-}(\alpha) \right|^2 \right] $ for each $1 \leq i \leq n$, we obtain that
\begin{eqnarray*}
 \var \left( \bar{X}_{j}^{\ast}  \right) & = & \frac{1}{n}  \left(  \int_{0}^{1} \E\left[ \left| \bF^{-}(\alpha) \right|^2 \right] \frac{p!}{(j-1)! (p-j)!}   \alpha^{j-1} (1-\alpha)^{p-j} d\alpha - \left|\E \left[ Y_{j}^{\ast}  \right]  \right|^2  \right) \\
 & = & \frac{1}{n}  \left(  \int_{0}^{1} \E\left[ \left| \bF^{-}(\alpha) \right|^2 \right] \frac{p!}{(j-1)! (p-j)!}   \alpha^{j-1} (1-\alpha)^{p-j} d\alpha +  \var \left( Y_{j}^{\ast}  \right) - \E \left[  |Y_{j}^{\ast} |^2 \right]   \right) \\
& = & \frac{1}{n}  \left(  \int_{0}^{1} \left( \E\left[ \left| \bF^{-}(\alpha) \right|^2 \right] - \left| F_{0}^{-}(\alpha) \right|^2  \right) \frac{p!}{(j-1)! (p-j)!}   \alpha^{j-1} (1-\alpha)^{p-j} d\alpha +  \var \left( Y_{j}^{\ast}  \right)   \right) \\
  & = & \frac{1}{n}  \left(  \int_{0}^{1}  \var \left(  \bF^{-}(\alpha)   \right) \frac{p!}{(j-1)! (p-j)!}   \alpha^{j-1} (1-\alpha)^{p-j} d\alpha +  \var \left( Y_{j}^{\ast}  \right)  \right),
\end{eqnarray*}
where, to obtain the last inequalities, we used that $ \E\left[ \bF^{-} \right] = F_{0}^{-}$ by Proposition \ref{prop:existence}. 
Therefore, from the above equality, one finally obtains that
\begin{eqnarray*}
\frac{1}{p} \sum_{j=1}^{p}  \var \left( \bar{X}_{j}^{\ast}  \right) - \var \left( Y_{j}^{\ast}  \right) & = &   \frac{1}{n}  \left(  \int_{0}^{1}  \var \left(  \bF^{-}(\alpha)   \right)  d\alpha   \right) +  \frac{1-n}{p n} \sum_{j=1}^{p}  \var \left( Y_{j}^{\ast}  \right), 
\end{eqnarray*}
which completes the proof of Lemma \ref{lem:A}.
\end{proof}

%
 
\subsection{Proof of Theorem \ref{theo:ratehatbnu}}

By  Definition \ref{defi:wdist} of the Wasserstein distance, and since $\neb = \frac{1}{p} \sum_{j=1}^{p} \delta_{\bar{X}_{j}^{\ast}}$, it follows by using Fubini's theorem that
\begin{eqnarray}
\E \left[ d_W^{2}(\neb, \nu_{0}) \right] & = & \E \left[ \int_{0}^{1} \left(F^{-}_{\neb}(\alpha) -  F_{0}^{-}(\alpha)    \right)^{2} d\alpha \right] = \E \left[   \sum_{j=1}^{p} \int_{(j-1)/p}^{j/p} \left(\bar{X}_{j}^{\ast} - F_{0}^{-}(\alpha) \right)^2 d\alpha \right] \nonumber \\
& = &\sum_{j=1}^{p} \int_{(j-1)/p}^{j/p}   \E \left[\bar{X}_{j}^{\ast} - F_{0}^{-}(\alpha) \right]^2 d\alpha \nonumber \\
& = & \sum_{j=1}^{p} \int_{(j-1)/p}^{j/p}   \E  \left[\bar{X}_{j}^{\ast}  -\E\left[ \bar{X}_{j}^{\ast} \right] \right]^2    +   \left(\E \left[ \bar{X}_{j}^{\ast} \right]   - F_{0}^{-}(\alpha) \right)^2  d\alpha  \nonumber \\
& = & \frac{1}{p} \sum_{j=1}^{p} \var \left( \bar{X}_{j}^{\ast}  \right) + \sum_{j=1}^{p} \int_{(j-1)/p}^{j/p}   \left(\E\left[ \bar{X}_{j}^{\ast} \right] - F_{0}^{-}(\alpha) \right)^2 d\alpha. \label{eq:eg1}
\end{eqnarray}
From Lemma \ref{lem:A}, one has that $\E\left[ \bar{X}_{j}^{\ast} \right] = \E \left[ Y_{j}^{\ast}  \right]$. Therefore, by combining  \eqref{eq:eg1} with  \eqref{eq:W2order}, we obtain that
\begin{eqnarray}
\E \left[ d_W^{2}(\neb, \nu_{0}) \right]  & = &  \frac{1}{p} \sum_{j=1}^{p} \var \left( \bar{X}_{j}^{\ast}  \right) + \sum_{j=1}^{p} \int_{(j-1)/p}^{j/p}   \left( \E \left[ Y_{j}^{\ast}  \right]- F_{0}^{-}(\alpha) \right)^2 d\alpha \nonumber \\
& = & \frac{1}{p} \sum_{j=1}^{p}\left( \var \left( \bar{X}_{j}^{\ast}  \right) - \var \left( Y_{j}^{\ast}  \right) \right)+ \E \left[ d_W^{2}(\bmu_{p}, \nu_{0}) \right]  \nonumber \\
& = & \frac{1}{n}  \left(  \int_{0}^{1}  \var \left(  \bF^{-}(\alpha)   \right)  d\alpha   \right) +  \frac{1-n}{p n} \sum_{j=1}^{p}  \var \left( Y_{j}^{\ast}  \right) + \E \left[ d_W^{2}(\bmu_{p}, \nu_{0}) \right] \\
& = & \frac{1}{n}  \int_{0}^{1}  \var\left(  \bF^{-}  (\alpha) \right)d\alpha +  \frac{1}{p n} \sum_{j=1}^{p}  \var \left( Y_{j}^{\ast}  \right) + \sum_{j=1}^{p} \int_{(j-1)/p}^{j/p}   \left( \E \left[ Y_{j}^{\ast}  \right]- F_{0}^{-}(\alpha) \right)^2 d\alpha , \nonumber
\end{eqnarray}
where the last equalities also follow from Lemma \ref{lem:A} and equality \eqref{eq:W2order}, which completes the proof of Theorem \ref{theo:ratehatbnu}.

\subsection{Proof of Theorem \ref{theo:ratehatbnupdiff}}

We recall that $\bnu_{n}^{\oplus}$ denotes the measure with quantile function given by equation \eqref{eq:quantoplus}. By the triangle inequality, we have that
\begin{equation}
d_W(\nebpbar, \nu_{0}) \leq d_W(\nebpbar, \bnu_{n}^{\oplus}) + d_W(\bnu_{n}^{\oplus}, \nu_{0}). \label{eq:triangle0}
\end{equation}
Thanks to Definition \ref{defi:wdist} of the Wasserstein distance, 
it follows by Fubini's theorem that
$$
\E \left[  d^2_W(\bnu_{n}^{\oplus}, \nu_{0}) \right] = \int_{0}^{1} \E \left[ \bar{\bF}_{n}^{-}(\alpha) - F_{0}^{-}(\alpha) \right]^2 d\alpha =  \int_{0}^{1} \E \left[  \frac{1}{n} \sum_{i=1}^{n} \bF^{-}_{i}(\alpha) - F_{0}^{-}(\alpha)\right]^2 d\alpha .
$$
By Assumption \ref{A3}, one has that $\E \left[ \bF^{-}_{i}(\alpha) \right] =  F_{0}^{-}(\alpha)$ for any $1 \leq i \leq n$, and thus by independence of the random variables $\bF^{-}_{i}(\alpha)$ one obtains that
\begin{equation}
\E \left[  d^2_W(\bnu_{n}^{\oplus}, \nu_{0}) \right] = \frac{1}{n}  \int_{0}^{1} \var\left(  \bF^{-}  (\alpha) \right)d\alpha.  \label{eq:var0}
\end{equation}
Hence, by \eqref{eq:var0}  and the inequality $\E \left[  d_W(\bnu_{n}^{\oplus}, \nu_{0}) \right] \leq \sqrt{ \E \left[  d^2_W(\bnu_{n}^{\oplus}, \nu_{0}) \right] } $, one obtains that
\begin{equation}
\E \left[  d_W(\bnu_{n}^{\oplus}, \nu_{0}) \right]  \leq n^{-1/2} \sqrt{  \int_{0}^{1} \var\left(  \bF^{-}  (\alpha) \right)d\alpha }.\label{eq:Eoplus0}
\end{equation}
Now, let us remark that
\begin{eqnarray*}
d_W(\nebpbar, \bnu_{n}^{\oplus}) & = &  \left\| \frac{1}{n} \sum_{i=1}^{n} F^{-}_{\tilde{\bnu}_{i}}  - \frac{1}{n} \sum_{i=1}^{n} \bF^{-}_{i} \right\| \leq  \frac{1}{n} \sum_{i=1}^{n} \left\| F^{-}_{\tilde{\bnu}_{i}}  - \bF^{-}_{i} \right\| ,
\end{eqnarray*}
where $F^{-}_{\tilde{\bnu}_{i}}$ denotes the quantile function of the measure $\tilde{\bnu}_{i} = \frac{1}{p_{i}} \sum_{j=1}^{p_{i}} \delta_{X_{i,j}}$ for each $1 \leq i \leq n$, and $\| \cdot \|$ denotes the usual norm in $L^{2}([0,1],dx)$. Hence, the above inequality leads to the following upper bound
\begin{equation}
\E \left[ d_W(\nebpbar, \bnu_{n}^{\oplus}) \right] \leq \frac{1}{n} \sum_{i=1}^{n} \sqrt{ \E \left[ d^2_W(\tilde{\bnu}_{i}, \bnu_{i}) \right] }. \label{eq:kernJ200}
\end{equation}
Therefore, Theorem \ref{theo:ratehatbnupdiff} follows from inequality \eqref{eq:triangle0} combined with \eqref{eq:Eoplus0} and \eqref{eq:kernJ200} which completes its proof.

\subsection{Proof of Theorem \ref{theo:ratehatbnuh}}

The proof follows the same lines than those of the proof of Theorem \ref{theo:ratehatbnupdiff}. By the triangle inequality, we have that
\begin{equation}
d_W(\seb, \nu_{0}) \leq d_W(\seb, \bnu_{n}^{\oplus}) + d_W(\bnu_{n}^{\oplus}, \nu_{0}). \label{eq:triangle}
\end{equation}
where  $\bnu_{n}^{\oplus}$ is the measure with quantile function given by equation \eqref{eq:quantoplus}. The expectation of the second term in the right-hand size of inequality \eqref{eq:triangle} is controlled by inequality \eqref{eq:Eoplus0}. Then, to control the first term, it suffices to remark that
\begin{eqnarray*}
d_W(\seb, \bnu_{n}^{\oplus}) & = &  \left\| \frac{1}{n} \sum_{i=1}^{n} F^{-}_{\hat{\bnu}^{h_{i}}_{i}}  - \frac{1}{n} \sum_{i=1}^{n} \bF^{-}_{i} \right\|
\end{eqnarray*}
where $F^{-}_{\hat{\bnu}^{h_{i}}_{i}}$ denotes the quantile function of the measure $\hat{\bnu}^{h_{i}}_{i}$ defined in \eqref{eq:kernsmooth}, and $\| \cdot \|$ denotes the usual norm in $L^{2}([0,1],dx)$. Therefore, one has that
\begin{eqnarray*}
d_W(\seb, \bnu_{n}^{\oplus}) & \leq & \frac{1}{n} \sum_{i=1}^{n}   \left\| F^{-}_{\hat{\bnu}^{h_{i}}_{i}}  - \bF^{-}_{i} \right\| \\
& = & \frac{1}{n} \sum_{i=1}^{n} d_W(\hat{\bnu}^{h_{i}}_{i}, \bnu_{i}) \leq    \frac{1}{n} \sum_{i=1}^{n} d_W(\hat{\bnu}^{h_{i}}_{i}, \tilde{\bnu}_{i})  +  \frac{1}{n} \sum_{i=1}^{n} d_W(\tilde{\bnu}_{i}, \bnu_{i}),
\end{eqnarray*}
where $\tilde{\bnu}_{i} = \frac{1}{p_{i}} \sum_{j=1}^{p_{i}} \delta_{X_{i,j}}$ for each $1 \leq i \leq n$. Hence, the above inequalities lead to the following upper bound
$$
\E \left[ d_W(\seb, \bnu_{n}^{\oplus}) \right] \leq \frac{1}{n} \sum_{i=1}^{n} \sqrt{ \E \left[ d^2_W(\hat{\bnu}^{h_{i}}_{i}, \tilde{\bnu}_{i}) \right] }  +  \frac{1}{n} \sum_{i=1}^{n} \sqrt{ \E \left[ d^2_W(\tilde{\bnu}_{i}, \bnu_{i}) \right] }.
$$
Finally, by applying Lemma \ref{lemma:kernsmooth} and inequality \eqref{eq:upBL}, we  obtain that
\begin{equation}
\E \left[ d_W(\seb, \bnu_{n}^{\oplus}) \right] \leq C_{\psi}^{1/2} \left( \frac{1}{n} \sum_{i=1}^{n} h_{i} \right) + \sqrt{2     \E \left[ J_{2}(\bnu) \right] }\left( \frac{1}{n} \sum_{i=1}^{n} p_{i}^{-1/2} \right). \label{eq:kernJ2}
\end{equation}
Therefore, Theorem \ref{theo:ratehatbnuh} follows from inequality \eqref{eq:triangle} combined with \eqref{eq:Eoplus0} and \eqref{eq:kernJ2} which completes its proof.

\subsection{Proof of Theorem \ref{theo:lowerbound}}

Let $A > 0$ and $\sigma > 0$. To derive Theorem \ref{theo:lowerbound}, we follow the classical scheme in nonparametric statistics to obtain optimal rates of convergence  (see Chapter 2 in \cite{MR2724359}). To this end, we introduce  appropriate random measures in $\WS$, satisfying the deformable model defined in Section \ref{sec:deform}, that will serve as the basic hypotheses to obtain a lower bound.

Let $m^{(1)}$ and $m^{(2)}$ be two real numbers such that
\begin{equation}
|m^{(1)} - m^{(2)}| = 2 C n^{-1/2}, \label{eq:m1m2}
\end{equation}
where $C$ is a positive constant to be specified later on. For $k = 1,2$, we let $\ba^{(k)}$ be independent Gaussian random variables with $\E \left[\ba^{(k)}\right] = m^{(k)}$ and $\var(\ba^{(k)}) = \gamma^2$ with $\gamma = \min(A^{1/2},\sigma)$. We also let $\HH^{(k)}$ denote the hypothesis that the data are sampled according the following deformable model:
\begin{equation}
X_{i,j}^{(k)} = \ba^{(k)}_{i} + Z_{i,j}^{(k)},  \quad 1 \leq i \leq n, \; 1 \leq j \leq p_{i}, \label{model:lowerbound}
\end{equation}
where $\ba^{(k)}_{1},\ldots,\ba^{(k)}_{n}$ are independent copies of $\ba^{(k)}$, and the $Z_{i,j}^{(k)}$'s are iid random variables sampled from the  Gaussian distribution with zero mean and variance $\gamma^2$, that are independent of the $\ba^{(k)}_{i}$'s. If we let $\bX_{i}^{(k)}$ be the random vector in $\R^{p_{i}}$ whose component are the random variables $(X_{i,j}^{(k)})_{1 \leq j \leq p_{i}}$, then the deformable model \eqref{model:lowerbound} corresponds to the assumption that $\bX_{1}^{(k)}, \ldots, \bX_{n}^{(k)}$ are independent random vectors, such that $\bX_{i}^{(k)}$ is a Gaussian vector with
\begin{equation}
\E \left[ \bX_{i}^{(k)} \right] = m^{(k)} \be_{i} \quad \mbox{ and } \quad \var \left(  \bX_{i}^{(k)}  \right) = \gamma^2 \left( \be_{i} \be_{i}^{t} + \bI_{i} \right), \label{eq:meancov}
\end{equation}
where $\be_{i}$ is the vector in $\R^{p_{i}}$ with all entries equal to one,  the notation  $\var \left( \bX \right)$ denotes the covariance matrix of a random vector $\bX$, and $\bI_{i}$ is the identity $p_{i} \times p_{i}$ matrix. For each $k=1,2$, if we denote by $\bnu_{i}^{(k)}$ the  measure from which $(X_{i,j}^{(k)})_{1 \leq j \leq p_{i}}$ are sampled, it follows, from model \eqref{model:lowerbound}, that $\bnu_{1}^{(k)},\ldots,\bnu_{n}^{(k)}$ are independent copies of the random measure $\bnu^{(k)}$ with density $\frac{1}{\gamma} \phi_{0}\left( \frac{x-\ba^{(k)}}{\gamma} \right), x \in \R$, where $\phi_{0}$ is the density of the standard Gaussian distribution. It can be easily checked that the barycenter $\nu_{0}^{(k)}$ in $W_{2}(\R)$ of the random measure $\bnu^{(k)}$ is the Gaussian distribution with mean $m^{(k)}$ and variance $\gamma^2$, and that
\begin{equation}
d_W(\nu_{0}^{(1)}, \nu_{0}^{(2)}) = |m^{(1)} - m^{(2)}| = 2 C n^{-1/2}. \label{eq:dW2}
\end{equation}
Hence, $\nu_{0}^{(k)}$ belongs to the class of distributions  $\FF(\R, A)$ introduced in Definition \ref{defin:classF}, for $k=1,2$.
Moreover, since $F_{\bnu^{(k)}}^{-}(\alpha) = \Phi_{0}^{-}(\alpha) + \ba^{(k)}, \; t \in [0,1]$ where $\Phi_{0}^{-}$ is the quantile function of the standard Gaussian distribution, it follows that
$$
\int_{0}^{1} \var \left( F_{\bnu^{(k)}}^{-}(\alpha) \right) d\alpha = \int_{0}^{1} \var \left(  \ba^{(k)} \right) d\alpha = \gamma^2 \leq \sigma^2.
$$
Therefore, the random measure $\bnu^{(k)}$ belongs to the class of distributions  $\DD(\R,\nu_{0}^{(k)}, \sigma^2)$ introduced in Definition \ref{defin:classD}, for $k=1,2$.

Then, for $k=1,2$, we let $\P^{(k)}$ be the probability measure of the data in model \eqref{model:lowerbound} under the hypothesis $\HH^{(k)}$. From our remark above, one has that $\P^{(k)}$ is the product of $n$ Gaussian measures $\P^{(k)}_{i}$ on $\R^{p_{i}}$ with mean and covariance given by \eqref{eq:meancov} for $1 \leq i \leq n$. Hence, the Kullback divergence  $K\left(\P^{(1)}, \P^{(2)} \right)$ between  $\P^{(1)}$ and $\P^{(2)}$ can be decomposed as follows
\begin{eqnarray}
K\left(\P^{(1)}, \P^{(2)} \right) & = & \sum_{i=1}^{n} K\left(\P^{(1)}_{i}, \P^{(2)}_{i} \right) \nonumber \\
& = & \frac{1}{2 \gamma^2} |m^{(1)} - m^{(2)}|^2 \sum_{i=1}^{n} \be_{i}^{t}  \left( \be_{i}\be_{i}^{t}  + \bI_{i} \right)^{-1} \be_{i} \nonumber \\
& = &  \frac{1}{2 \gamma^2} |m^{(1)} - m^{(2)}|^2 \sum_{i=1}^{n} \frac{p_{i}}{p_{i}+1} \leq \frac{n}{2 \gamma^2} |m^{(1)} - m^{(2)}|^2 \nonumber \\
& \leq & 2 C^{2} \max(A^{-1},\sigma^{-2}), \label{eq:boundKull}
\end{eqnarray}
where the last inequality follows from \eqref{eq:m1m2} and the fact that $\gamma^2 = \min(A,\sigma^{2})$. 

To conclude the proof, we finally follow the arguments from Section 2.2 in \cite{MR2724359} on a reduction scheme to a finite number $M$ of hypotheses (here $M=2$). First, thanks to Markov's inequality, one has that
$$
\inf_{\hat{\bnu}} \sup_{\nu_{0} \in \FF(\R,A) } \sup_{ \bnu \in \DD(\R,\nu_{0}, \sigma^2) } \E \left[ n^{1/2} d_W(\hat{\bnu}, \nu_{0}) \right] \geq C \inf_{\hat{\bnu}} \sup_{\nu_{0} \in \FF(\R,A) } \sup_{ \bnu \in \DD(\R,\nu_{0}, \sigma^2) } \P \left(  d_W(\hat{\bnu}, \nu_{0}) \geq C n^{-1/2}  \right),
$$
and thus, the following lower bound holds
\begin{equation}
\inf_{\hat{\bnu}} \sup_{\nu_{0} \in \FF(\R,A) } \sup_{ \bnu \in \DD(\R,\nu_{0}, \sigma^2) } \E \left[ n^{1/2} d_W(\hat{\bnu}, \nu_{0}) \right] \geq C \inf_{\hat{\bnu}} \max_{k \in \{1,2\}}  \P^{(k)} \left( d_W(\hat{\bnu}, \nu_{0}^{(k)})  \geq C n^{-1/2} \right), \label{eq:lower1}
\end{equation}
where  $\P^{(k)}$ denotes the probability measure of the data in model \eqref{model:lowerbound} under the hypothesis $\HH^{(k)}$  for $k=1,2$.  Now,  thanks to equality \eqref{eq:dW2}, the two hypotheses $\HH^{(1)}$ and $\HH^{(2)}$  are  $2$s-separated in the sense of condition (2.7) in  \cite{MR2724359}  (with $s = C n^{-1/2} $). Hence, by inequality (2.9) in \cite{MR2724359}, one has that
\begin{equation}
\inf_{\hat{\bnu}} \max_{k \in \{1,2\}}  \P^{(k)} \left( d_W(\hat{\bnu}, \nu_{0}^{(k)})  \geq C n^{-1/2}  \right) \geq p_{e,1},  \label{eq:lower2}
\end{equation}
where $p_{e,1}$ is defined by equation (2.10) in  \cite{MR2724359}. Then, by the upper bound \eqref{eq:boundKull} on the Kullback divergence between  $\P^{(1)}$ and $\P^{(2)}$, we can combine the Kullback version of Theorem 2.2 in \cite{MR2724359} with inequalities \eqref{eq:lower1} and \eqref{eq:lower2} to obtain that
$$
\inf_{\hat{\bnu}} \sup_{\nu_{0} \in \FF(\R,A) } \sup_{ \bnu \in \DD(\R,\nu_{0}, \sigma^2) } \E \left[ n^{1/2} d_W(\hat{\bnu}, \nu_{0}) \right] \geq C p_{e,1} \geq C \max \left( \frac{1}{4} \exp(-\alpha), \frac{1-\sqrt{\alpha / 2}}{2}\right)  
$$
with $\alpha = 2 C^{2} \max(A^{-1},\sigma^{-2})$. Therefore, taking $C =  \min(A^{1/2},\sigma)$ completes the proof of Theorem \ref{theo:lowerbound}.

\bibliographystyle{alpha}
\bibliography{RatebarycenterW2}

\end{document}